\newcommand{\R}{\mathbb{R}}
\newcommand{\e}{\varepsilon}
\newcommand{\Z}{\mathbb{Z}}
\renewcommand{\P}{\mathbb{P}}
\newcommand{\dd}{\mathrm{d}}
\newcommand{\lin}{\left[\kern-0.15em\left[}
\newcommand{\rin} {\right]\kern-0.15em\right]}
\newcommand{\linf}{[\kern-0.15em [}
\newcommand{\rinf} {]\kern-0.15em ]}
\newcommand{\ilin}{\left]\kern-0.15em\left]}
\newcommand{\irin} {\right[\kern-0.15em\right[}
\def\ben#1{\begin{equation}#1\end{equation}}
\def\al#1{\begin{align*}#1\end{align*}}
\def\aln#1{\begin{align}#1\end{align}}
\renewcommand{\tilde}{\widetilde}
\newtheorem{lem}{Lemma}[section]
\newtheorem{remark}[lem]{Remark}
\newtheorem{thm}[lem]{Theorem}
\newtheorem{cor}[lem]{Corollary}
\newtheorem {defin}[lem] {Definition}
\newtheorem {rem}[lem] {Remark}
\newcounter{assu}
\DeclareMathOperator{\diverg}{div}
\numberwithin{equation}{section}
\title[Asymptotics of the $p$-capacity in the critical regime]
{Asymptotics of the $p$-capacity in the critical regime}
\date{\today}
\author{Cl\'ement Cosco} 
\address[Cl\'ement Cosco]
{Weizmann Institute of Science, Israel}
\email{clement.cosco@gmail.com}
\author{Shuta Nakajima} 
\address[Shuta Nakajima]
{University of Basel, Basel, Switzerland}
\email{shuta.nakajima@unibas.ch}
\author{Florian Schweiger} 
\address[Florian Schweiger]
{Weizmann Institute of Science, Israel}
\email{florian.schweiger@weizmann.ac.il}
\keywords{p-Capacity, variational problem, first passage percolation}
\subjclass[2010]{Primary 31C45 secondary 31C20 ; 94C15;  60K35}
\begin{document}

\begin{abstract} 
In this note, we are interested in the asymptotics as $n\to\infty$ of the $p$-capacity between the origin and the set $nB$, where $B$ is the boundary of the unit ball of the lattice $\mathbb Z^d$. The $p$-capacity is defined as the minimum of the Dirichlet energy $\frac{1}{2}\sum_{x\in \mathbb Z^d} \sum_{y\sim x} |f(x)-f(y)|^{p}$ with $f$ subject to the boundary conditions $f(0)=0$ and $f\geq 1$ on $nB$. This variational problem has arisen in particular in the study of large deviations for first passage percolation.
For $p<d$, the $p$-capacity converges to some positive constant, while for $p>d$ the capacity vanishes polynomially fast. The present paper deals with the case $p=d$, for which we prove that the $p$-capacity vanishes as $c_d (\log n)^{-d+1}$ with an explicit constant $c_d$. 
\end{abstract}

\maketitle

\section{Introduction}
\subsection{Setting}
Fix $p\geq 1$. Let $G=(V,E)$ be a graph and $A,B\subset V$.  Define the $p$-\emph{capacity} between two sets $A$ and $B$ as:
 \begin{equation}
 \label{general lambda}
\begin{aligned}
  & {\rm Cap}_{G,p}(A;B)\\
   &=\inf_{f: V\to \R}\left\{\sum_{e=\langle x, y\rangle \in E} |f(x)-f(y)|^{p}\ \middle|~\,f(x)\geq 1~\forall x\in B,\,f(x)=0~\forall x\in A\right\}.
   \end{aligned}
\end{equation}
Let $D_M=\{x\in \Z^d:~|x|_\infty \leq M\}$ be the ball of radius $M$, $\partial D_M=\{y\in \mathbb Z^d, |y|_\infty = M\}$ its boundary, and $E_M$ the set of all edges whose both ends are in $D_M$. We consider the graph $G_n=(D_n,E_n)$ and define: 
%We also write $B_M=\{x\in \R^d:~|x|_1 \leq M\}$ and $\partial B_M=\{y\in \mathbb R^d, |y|_1 = M\}$.
\[{\rm Cap}_{d,p}(n)={\rm Cap}_{G_n,p}(\{0\},\partial D_n).\]
In \cite{CN}, the first two authors have shown the following:
\begin{thm}[Theorem 2.2 in \cite{CN}]\label{asymptotic for lambda general}
  For $d\geq 1$ and $p>0$, the function
\[
\kappa_{d,p}(n)=
\begin{cases}
  {\rm Cap}_{d,p}(n)&\text{ if }p<d,\\
 (\log{n})^{d-1}  {\rm Cap}_{d,p}&\text{ if }p=d,\\
  n^{p-d}  {\rm Cap}_{d,p}&\text{ if }p>d.
\end{cases}\]  
   is bounded away from $0$ and $\infty$.
\end{thm}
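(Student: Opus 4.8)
The plan is to sandwich ${\rm Cap}_d(n)$ between two matching bounds, both governed by the elementary partial sum
\[
S_n:=\sum_{m=1}^{n} m^{-\frac{d-1}{p-1}},
\qquad\text{for which}\qquad
S_n\asymp\begin{cases}1 & p<d,\\ \log n & p=d,\\ n^{\frac{p-d}{p-1}} & p>d,\end{cases}
\]
the three regimes coming from whether the exponent $\frac{d-1}{p-1}$ exceeds, equals, or is below $1$. I will prove that ${\rm Cap}_d(n)\asymp S_n^{1-p}$; raising each case of $S_n$ to the negative power $1-p$ then yields $S_n^{1-p}\asymp 1$, $(\log n)^{1-d}$, and $n^{d-p}$ respectively, which is exactly the statement that $\kappa_{d,p}(n)$ is bounded away from $0$ and $\infty$. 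Throughout assume $p>1$; the cases $p\le 1$ are discussed separately below. I write $|\nabla f(e)|=|f(x)-f(y)|$ for $e=\langle x,y\rangle$, and use the counting estimate $|\Pi_m|\asymp m^{d-1}$, where $\Pi_m$ is the set of edges joining a vertex of $\ell^\infty$-norm $m-1$ to one of norm $m$ (the edge-boundary of the cube $D_{m-1}$).

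\emph{Upper bound.} I use a radial test function $f(x)=g(|x|_\infty)$ with $g(0)=0$ and increments $g(m)-g(m-1)=S_n^{-1}\,m^{-\frac{d-1}{p-1}}$, so that $g(n)=1$ and $f$ is admissible. Only edges of some $\Pi_m$ carry a nonzero gradient, giving
\[
\sum_{e}|\nabla f(e)|^p
\asymp\sum_{m=1}^n |\Pi_m|\,\bigl(g(m)-g(m-1)\bigr)^p
\asymp S_n^{-p}\sum_{m=1}^n m^{\,d-1-\frac{p(d-1)}{p-1}}
= S_n^{-p}\sum_{m=1}^n m^{-\frac{d-1}{p-1}}=S_n^{1-p},
\]
where I used $d-1-\frac{p(d-1)}{p-1}=-\frac{d-1}{p-1}$. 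Hence ${\rm Cap}_d(n)\lesssim S_n^{1-p}$.

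\emph{Lower bound.} Truncating any admissible $f$ to $[0,1]$ only lowers the energy. Fix a probability measure $\mu$ on nearest-neighbour paths $\gamma$ from $0$ to $\partial D_n$. For every such path, the triangle inequality and telescoping give $\sum_{e\in\gamma}|\nabla f(e)|\ge |f(\text{end of }\gamma)-f(0)|\ge 1$. Averaging over $\mu$ and setting $w(e)=\mu(\gamma\ni e)$, Hölder with $p'=\tfrac{p}{p-1}$ yields
\[
1\le \E_\mu\sum_{e\in\gamma}|\nabla f(e)|=\sum_e|\nabla f(e)|\,w(e)
\le\Big(\sum_e|\nabla f(e)|^p\Big)^{1/p}\Big(\sum_e w(e)^{p'}\Big)^{1/p'},
\]
so that $\sum_e|\nabla f(e)|^p\ge\big(\sum_e w(e)^{p'}\big)^{-(p-1)}$. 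It remains to choose $\mu$ flattening the loads: I take $\gamma$ to increase $|x|_\infty$ by one at each step (crossing each $\Pi_m$ exactly once), with the outward step chosen uniformly, so that by symmetry each level-$m$ edge carries $w(e)\asymp m^{-(d-1)}$. Then $\sum_e w(e)^{p'}\asymp\sum_{m=1}^n m^{d-1}\,m^{-(d-1)p'}=\sum_{m=1}^n m^{-\frac{d-1}{p-1}}=S_n$, using $(d-1)(1-p')=-\frac{d-1}{p-1}$, and therefore ${\rm Cap}_d(n)\gtrsim S_n^{-(p-1)}=S_n^{1-p}$, matching the upper bound.

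\emph{Edge cases and the main obstacle.} For $p=1$ the energy is a total variation, so ${\rm Cap}_d(n)$ equals the minimal size of an edge cutset separating $0$ from $\partial D_n$, namely the $2d$ edges at the origin; this constant is consistent with the theorem (here $p\le d$ and $\kappa$ contains no diverging factor). For $p<1$ the map $t\mapsto t^p$ is subadditive, so along a single path $\sum_{e\in\gamma}|\nabla f(e)|^p\ge(\sum_{e\in\gamma}|\nabla f(e)|)^p\ge 1$, giving ${\rm Cap}_d(n)\ge 1$, while the trivial choice $f=\1{\{x\neq 0\}}$ has energy $2d$; thus ${\rm Cap}_d(n)\in[1,2d]$, and since $p<d$ in this regime only boundedness is required. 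The main obstacle is the load estimate $w(e)\lesssim m^{-(d-1)}$ in the $p>1$ lower bound: one must rigorously control how the random radial path spreads over the $\ell^\infty$-spheres $\partial D_m$, in particular near the edges and corners of the cubes where the branching ratios of $\partial D_m$ are irregular, so that no edge is overloaded. Once this is secured, the whole theorem reduces to the asymptotics of $S_n$ and the counting bound $|\Pi_m|\asymp m^{d-1}$.
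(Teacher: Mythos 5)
First, note that the paper does not actually prove this statement: it is imported verbatim from \cite{CN} (Theorem 2.2 there), so there is no in-paper proof to compare against. Judging your argument on its own merits: the reduction to $S_n=\sum_{m=1}^n m^{-(d-1)/(p-1)}$ and the claim $\mathrm{Cap}_d(n)\asymp S_n^{1-p}$ is the right skeleton, the upper bound via the radial test function is correct and complete, and the $p\le 1$ cases are handled correctly. The problem is the lower bound for $p>1$, and it is worse than the ``needs care near corners'' caveat you attach to it: the random-path measure you propose does not exist in any useful form. A nearest-neighbour step increases $|x|_\infty$ only if it increases $|x_i|$ for a coordinate $i$ that already attains the maximum; consequently, once the path leaves the origin toward $\pm e_i$, the \emph{only} neighbour of $m\,(\pm e_i)$ with larger $\ell^\infty$-norm is $(m+1)(\pm e_i)$ (e.g.\ in $d=2$ the point $(1,0)$ has outward neighbour $(2,0)$ only, since $(1,\pm1)$ still has norm $1$). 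So a path that increases $|x|_\infty$ at every step is confined to a coordinate half-axis, your measure $\mu$ is supported on exactly $2d$ paths, $w(e)=1/(2d)$ on axis edges, and $\sum_e w(e)^{p/(p-1)}\asymp n$ rather than $S_n$. This yields only $\mathrm{Cap}_d(n)\gtrsim n^{1-p}$, which is far too weak (for $p<d$ it does not even give a positive lower bound). The invocation of ``symmetry'' cannot rescue this, since the symmetry group of the cube does not act transitively on $\partial D_m$.

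The gap is repairable by replacing the path construction with a flow that genuinely spreads over the spheres. Two standard options: (i) the random-ray construction — pick a uniformly random direction $u$ and let $\gamma_u$ be a lattice path from $0$ to $\partial D_n$ staying within bounded distance of the ray $\mathbb{R}_+u$ (such paths are allowed to move within a shell between crossings); then for an edge $e$ at distance $m$ one has $w(e)=\mu(\gamma\ni e)\lesssim m^{-(d-1)}$ because the set of directions whose ray passes within $O(1)$ of $e$ has measure $O(m^{-(d-1)})$; or (ii) Lyons' discretization of the divergence-free vector field $\Theta(x)=x/|x|_q^{d}$ by integrating it over unit faces — this is precisely what the present paper does in Section \ref{subsec:pharmo} and the subsection following it for $p=d$, and the same discrete flow $\theta$, which satisfies $|\theta(e)|\lesssim m^{-(d-1)}$ and has nonzero strength, works verbatim for every $p>1$ once plugged into your H\"older/Thomson inequality. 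With either replacement, $\sum_e w(e)^{p/(p-1)}\asymp S_n$ and your argument closes. As written, however, the lower bound does not stand.
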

In this {article}, we give a precise estimate in the critical case $p=d$.

\subsection{Main result} 
\begin{thm} \label{th:mainTh} Assume that $p=d$. As $n\to\infty$,
\begin{equation} \label{eq:asymptoticCap}
{\rm Cap}_{d,d}(n) \sim \frac{{\bf \rm vol}_{d-1}(S^{d-1}_q)}{(\log n)^{d-1}},
 \end{equation}
 where $q=\frac{d}{d-1}$, $S_q^{d-1}=\{x\in \R^d:~ |x|_q=1\}$ and ${\bf \rm vol}_{d-1}(S_q^{d-1})$ denotes the $(d-1)$-dimensional volume of $S_q^{d-1}$.
\end{thm}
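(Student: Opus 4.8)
The plan is to reduce the discrete problem to an anisotropic continuum variational problem, solve the latter explicitly, and then sandwich the discrete capacity between matching upper and lower bounds. The key observation is that, writing $e_i$ for the lattice basis vectors, the Dirichlet energy factors as
\[
\sum_{e=\langle x,y\rangle}|f(x)-f(y)|^d=\sum_{x}\sum_{i=1}^d|f(x+e_i)-f(x)|^d,
\]
which is the lattice analogue of the anisotropic energy $\int_{\R^d}\sum_{i=1}^d|\partial_i u|^d\,\dd x$. Its Euler--Lagrange operator is the pseudo-$d$-Laplacian $\sum_i\partial_i(|\partial_i u|^{d-2}\partial_i u)$, and a direct computation shows that $u(x)=c\log|x|_q$ solves it exactly when $p=d$ (the coefficient $d-p$ of the radial term vanishes). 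Moreover, for $u=g(|x|_q)$ one has the pointwise identity $\sum_i|\partial_i u|^d=|g'(|x|_q)|^d$, so the homogeneity relation $\mathrm{vol}_d\{|x|_q\le t\}=t^d\,\mathrm{vol}_d(B_q)$ turns the energy into $d\,\mathrm{vol}_d(B_q)\int_r^R|g'(t)|^d t^{d-1}\,\dd t$. Minimizing over the boundary conditions $g(r)=0$, $g(R)=1$ gives $g'(t)=c/t$ with $c=1/\log(R/r)$ and continuum capacity $d\,\mathrm{vol}_d(B_q)\,(\log(R/r))^{-(d-1)}$ between the $\ell^q$-spheres of radii $r$ and $R$. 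Here $B_q$ is the $\ell^q$ unit ball and $d\,\mathrm{vol}_d(B_q)$ is precisely the $\ell^d$-anisotropic (Wulff) perimeter of $B_q$, the quantity denoted $\mathrm{vol}_{d-1}(S_q^{d-1})$ in the theorem. Strict convexity of the functional makes this logarithmic profile the unique minimizer, which supplies both continuum bounds. One should also note that replacing the inner sphere by the point $0$ and the outer $\ell^q$-sphere by $\partial D_n$ only shifts $\log(R/r)$ by an additive $O(1)$, hence changes the capacity at relative order $O(1/\log n)$; this is why the $\ell^\infty$ shape of $\partial D_n$ does not affect the leading constant.

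For the upper bound I would use the discretized optimizer $f(x)=\min\{1,\max\{0,\log|x|_q\}/\log n\}$. It satisfies $f(0)=0$, and since $|x|_q\ge|x|_\infty=n$ on $\partial D_n$ it satisfies $f\equiv 1$ there. In the bulk the function varies on scale $1/(|x|_q\log n)$ per lattice step, so a Taylor expansion reduces each increment to $|\partial_i f(x)|^d(1+o(1))$ and the sum to the continuum integral; the innermost shell $|x|_q\approx 1$ contributes $O((\log n)^{-d})$ and a neighborhood of $\partial D_n$ contributes likewise at lower order. This yields ${\rm Cap}_d(n)\le(1+o(1))\,\mathrm{vol}_{d-1}(S_q^{d-1})\,(\log n)^{-(d-1)}$.

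The matching lower bound is the heart of the argument and the main obstacle. Given a near-optimal $f$, I would pass to the level sets $\{f<t\}$, $t\in(0,1)$: each separates $0$ from $\partial D_n$, so its boundary encloses the origin, and a discrete co-area inequality bounds the energy below by $\int_0^1(\text{cost of the separating surface at level }t)\,\dd t$. The cost of a surface enclosing $\ell^q$-volume $V$ is controlled from below, with sharp constant, by the lattice anisotropic isoperimetric inequality whose extremizer is the $\ell^q$/Wulff shape — this is exactly where the $\ell^d$--$\ell^q$ duality and the constant $\mathrm{vol}_{d-1}(S_q^{d-1})$ enter — and combining it with the growth of the enclosed volume forced by the boundary condition reproduces the logarithmic integral. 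Equivalently, one may interpolate $f$ to a piecewise-multilinear continuum function, use convexity to bound its anisotropic energy by $(1+o(1))$ times the discrete energy, and invoke the continuum lower bound above. In either route the genuine difficulty is quantitative and uniform: the critical capacity is a sum over $\sim\log n$ logarithmic scales, each of order $(\log n)^{-d}$, so to recover the sharp leading constant every per-scale discretization and isoperimetric error must be shown to be $o(1/\log n)$ relative, uniformly across scales, so that the accumulated error is $o(1)$. Securing this uniform control, together with the sharp anisotropic isoperimetric input on the lattice, is where the bulk of the work lies.
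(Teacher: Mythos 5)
Your continuum analysis and your upper bound are sound and essentially coincide with the paper's: the explicit $d$-harmonic profile $\log|x|_q$ for the pseudo-$d$-Laplacian, the identity $\sum_i|\partial_i u|^d=|g'(|x|_q)|^d$ for functions radial in $|\cdot|_q$, and the discretized logarithm as a test function all appear there in the same form. The gap is in the lower bound, which you correctly identify as the heart of the matter but do not actually resolve. The paper's key idea, absent from your sketch, is to dualize via a nonlinear Thomson principle: ${\rm Cap}_d(n)^{-1/(d-1)}$ is the infimum of $\sum_e|\theta(e)|^{d/(d-1)}$ over unit flows from $0$ to $\partial D_n$, and a near-optimal discrete flow is produced explicitly by integrating the divergence-free field $\Theta_i(x)=x_i/|x|_q^{d}$ over the faces of unit lattice cubes (Lyons' construction), after which one computes its strength and its dual energy directly. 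This turns the lower bound into another ``plug in an explicit competitor'' computation; the $\ell^d$--$\ell^q$ duality you allude to is exactly what the dual exponent $d/(d-1)$ encodes.

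Concerning your two proposed routes: (a) the co-area/isoperimetric route needs, at each level, a lower bound on the cost of a discrete separating surface with the \emph{sharp anisotropic} constant; the naive H\"older estimate $\sum_{e\in\Sigma}|df(e)|^d\geq|\Sigma|^{-(d-1)}\bigl(\sum_{e\in\Sigma}|df(e)|\bigr)^d$ weights all edge directions equally and yields the wrong constant (an $\ell^1$-type perimeter of the cutset rather than ${\rm vol}_{d-1}(S_q^{d-1})$), so one would need a sharp direction-weighted discrete isoperimetric inequality, uniformly over the $\sim\log n$ scales, which you assert but do not supply --- and which is essentially equivalent to the dual flow bound. (b) The interpolation route is closer to workable (convexity does give $\int\sum_i|\partial_i u|^d\le\sum_e|df(e)|^d$ for the multilinear interpolant $u$), but as stated it is vacuous: the continuum $d$-capacity of the single point $\{0\}$ is \emph{zero} in the critical case $p=d$, so invoking ``the continuum lower bound above'' with inner set $\{0\}$ gives nothing. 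One must additionally show that any low-energy discrete competitor is $o(1)$ on the whole unit $\ell^q$-sphere around the origin (e.g.\ because each increment near $0$ is at most $O((\log n)^{-(d-1)/d})$), and only then apply the continuum bound between $S_q^{d-1}$ and $\partial([-n,n]^d)$. Neither of these repairs appears in your sketch, so as written the matching lower bound is not established.
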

\begin{remark}
One can replace the norm in the definition of $D_M$ by any norm $|\cdot|_r$ with $r>0$ and find the same asymptotics as in \eqref{eq:asymptoticCap} with the same proof. We keep the norm $|\cdot |_\infty$ for simplicity.
\end{remark}
\subsection{Background and related works}
Asymptotics of the $p$-capacity have appeared naturally in the study of upper tail large deviations in the context of first passage percolation on $\mathbb Z^d$. Let us briefly describe the model. Let $(\tau_e)_{e\in E}$ be i.i.d.\ weights defined on the edges of $\mathbb Z^d$, and 
\[{\rm T}(0,x) = \inf_{\gamma : 0\to x} \sum_{e\in \gamma} \tau_e,\]
be the infimum of the weight of all paths from $0$ to $x\in\mathbb Z^d$, that we call the \emph{passage time}. The passage time satisfies the following law of large numbers:
\[\mu(x)=\lim_{n\to\infty} \frac{1}{n} {\rm T}(0,nx)\,\text{a.s.},\]
where $\mu$ is a deterministic function. It is shown in \cite{CN} that if the weight distribution satisfies $\P(\tau_e > t) \asymp { e^{\alpha t^{-p}}}$ with $1<p<d$, then for all $\xi>0$,
\begin{equation}\label{maineq2}
\lim_{n\to\infty} \frac{1}{n^p} \log \P\left({\rm T}(0,nx) \geq (\mu(x)+\xi) n \right)= -\alpha 2^{1-p} \lambda_{d,p} \,\xi^,
\end{equation}
where 
\[\lambda_{d,p} = \lim_{n\to\infty} \mathrm{Cap}_{d,p}(n)>0.\]
An interesting behavior appears in the critical case $p=d$. In the same paper, it is proven that for all $\xi>0$,
\begin{equation} \label{eq:resultCN}
\begin{aligned}
\lim_{n\to\infty} \frac{1}{n^d\,\mathrm{Cap_{d,d}}(\ell_n)} \log \P\big({\rm T}(0,nx)\geq (\mu+\xi) n \big)
= -\alpha 2^{1-d} \, \xi^d.
\end{aligned}
\end{equation}
with $\ell_n\sim n/\log n$. Thus, if we combine \eqref{eq:resultCN} with Theorem \ref{th:mainTh}, we obtain that
\begin{equation}
\lim_{n\to\infty} \frac{(\log{n})^{d-1}}{n^d} \log \P\left({\rm T}(0,nx)\geq (\mu+\xi) n \right)
= -\alpha 2^{1-d} \, \xi^d\,{{\bf \rm vol}_{d-1}(S^{d-1}_q)},
\end{equation}
and we see that a logarithmic { correction} appears in the order of magnitude of the large deviations (compare with \eqref{maineq2}).

When $p=2$, the capacity admits a representation in terms of probabilities of hitting time of random walks, see \cite[pp.\ 25\&36]{LP16}.  Moreover, the properties of the capacity have been extensively studied in this case -- in particular, the asymptotics given in Theorem \ref{th:mainTh} are well-known for $p=2$, see e.g.\ \cite[Proposition 6.3.2]{LL10}. The standard way to deal with these estimates for $p=2$ is to rely on Fourier analysis, but this does not adapt well when $p$ is {different from} $2$; thus we follow  another route { here}.

\subsection{Idea of the proof and organization of the paper} 
The main idea is to compare the discrete $p$-capacity in \eqref{general lambda} to its continuous analogue\footnote{Note that there are other way to define the continuous $p$-capacity; for example the standard way is to choose the operator in the energy as $(\sum_{i=1}^d |\partial_i f|^2)^{p/2}$. Our choice of operator is adapted to the form of the discrete capacity in \eqref{general lambda}.}
\begin{equation*}\label{cont_cap}
 \begin{aligned}
 & {\rm Cap}_{d,p}^{c}(A;B) = \\
   &\inf\left\{\int_{\R^d}\sum_{i=1}^d|\partial_i f|^p\ \middle|~\,f\in W^{1,p}(\R^d),f(x)= 1~\forall x\in B,\,f(x)= 0~\forall x\in A\right\},
   \end{aligned}
\end{equation*}
for $A,B\subset\mathbb R^d$.
Under some general conditions on $A$ and $B$, one can for example show by using the tools of $\Gamma$-convergence that
\[\lim_{n\to\infty}n^{p-d}{\rm Cap}_{\Z^d,p}(nA\cap\Z^d;nB\cap\Z^d)={\rm Cap}_{d,p}^{c}(A;B).\]
{In fact, it is possible to deduce this from general results in \cite{AC04}.}
But these tools are not sharp enough to obtain the precise asymptotics in \eqref{eq:asymptoticCap}, so one needs to be able to compare the continuous capacity to the discrete one with a higher degree of precision.

Our strategy to prove Theorem \ref{th:mainTh} goes as follows. We begin by considering the continuous analogue of the variational problem that defines ${\rm Cap}_{d,d}(n)$. The corresponding continuous capacity is easily computed thanks to a symmetry argument that reduces the question to a 1-dimensional problem. Moreover, we can give an explicit expression to the continuous minimizer. Then, we can obtain an upper bound on ${\rm Cap}_{d,d}(n)$ by choosing discrete approximation of the continuous optimizer. In order to obtain a lower bound, we rely on Thomson's variational formulation of the $p$-capacity (Theorem \ref{th:Thomson}), whose proof follows from the electrical network considerations that we present in Section \ref{sec:electrical_network}. The problem is then reduced to constructing a discrete flow that is a good approximation to the optimal continuous flow. To do so, we average locally the optimal continuous flow in a careful way to keep the flow property on the lattice. The proof of Theorem \ref{th:mainTh} is presented in Section \ref{sec:mainProof}.

%\textcolor{blue}{how to do that? It should be equal to $vol(S^{d-1}_q)$. Note that by Green-Ostrogradski theorem we have $\Vert \theta \Vert = \Vert \Theta \Vert$. So one way to prove this is via the continuous analogue of \eqref{eq:effect_res_def}, which combined with Theorem \ref{th:cont_cap} shows that $\Vert \Theta \Vert$ equals $\mathrm{vol}(S^{d-1}_q)$.  But I wonder if we can prove this equality directly by integrating $\Theta$ against a good surface.}
%By definition, $\Vert \theta \Vert$ is equal to the integral of the continuous flow $\Theta$ against the surface of the cube of length one. By the Green-Ostrogradski theorem, this is also equal to
%\[\int_{S^{d-1}_q} \Theta \cdot \dd \vec{S} = q \int_{S^{d-1}_q} \vec{S} \cdot \dd \vec{S} = {\rm vol}_{d-1}(S_q^{d-1}), \]
%by the definition of $\Theta$.
%Putting everything together, we obtain from Theorem \ref{th:Thomson} that
%\[\mathrm{Cap}_d(n)^{-1/(d-1)} \leq  \frac{1}{\Vert \theta \Vert^q}\sum_{r>d} |A(r,r+1)|\frac{(r+1+d)^{q}}{(r-d)^{dq}},\]
%for some $C>0$ that depends only on, and where $\Vert \theta \Vert$ is the strength of the flow $\theta$ defined in Section \ref{sec:electrical_network}.

\section{\texorpdfstring{$p$-electrical network and Thomson principle}{p-electrical network and Thomson principle}}
\label{sec:electrical_network}
The $p$-capacity on a graph can be represented as the effective capacity of an associated $p$-electrical network. If $p\neq 2$, the network has the property of satisfying a \emph{nonlinear} Ohm law, in the sense that if $h$ denotes the potential of the network and $I$ its current, one has
\[I=c\,U(\Delta h),\]
where $c$ is the conductance of the network and $U(u) = |u|^{p-1}\times \mathrm{sign}(u) = |u|^{p-2} u$ is nonlinear when $p\neq 2$. 
The study of nonlinear networks was initiated in \cite{D47,M60}, and there has been later on much focus on the case of infinite graphs, see e.g.\ \cite{HS97} or \cite[Appendix 2]{S94} and references therein.

In this section, we explain the relation between the $p$-capacity and its associated $p$-electrical network, and use it to show Thomson's principle (Theorem \ref{th:ThomsonForSets}). It is also a good opportunity to introduce the key notions of $p$-harmonic functions, flows and strength of flows that will play a central { role} in the proof of Theorem \ref{th:mainTh}.

Although different formulations of Thomson's principle have been shown to hold for general types of functions $U$ and infinite graphs \cite{K16,MS90}, we restrict ourselves to the case of finite graphs and for the function $U(u) = |u|^{p-1}\times \mathrm{sign}(u) = |u|^{p-2} u$. Our motivation is to present a simple approach, inspired from \cite[Section 8]{P99} (which deals with the linear case $p=2$), that {avoids the technicalities arising when considering a more general setting.}

\subsection{\texorpdfstring{$p$-harmonic functions and flows}{p-harmonic functions and flows}}
Let $V$ be { a} finite set of vertices. For $x,y\in V$, we let $\langle x,y\rangle=\{x,y\}$ denote the \emph{undirected edge} with $x,y$ at both ends. We consider a finite graph $G=(V,E)$  with $E$ being a collection of {undirected} edges, where edges may be repeated (we allow multiple edges). In the following, we will also need to consider \emph{directed edges} $\vec{xy}$ that are given by an ordered couples $(x,y)\in V^2$.

We assume the graph $G$ to be \emph{connected}, in the sense that for all $x,y\in V$, there exists a path $x=x_0,x_1,\dots,x_n=y$ such that $\{x_{i-1},x_{i}\}\in E$ for all $i\leq n$.

An electric network on $G$ is defined by the choice of some $a,b\in V$ which are called the \emph{source} and the \emph{sink} of the network, and by a \emph{conductance} $c_{xy}=c_{yx} \in \mathbb R_+$ defined on all edges $\langle x,y\rangle \in E$. We also define $r_{xy} = c_{xy}^{-1}$ to be the \emph{resistance} of $\langle x,y\rangle$. 

Throughout the section, we fix $p>1$ if not stated otherwise. 
\begin{defin}
A function $h\colon V\to \mathbb R$ is said to be $p$-\textbf{harmonic at a vertex} $x\in V$ if
\begin{equation} \label{eq:harmonic}
\sum_{y\sim x} c_{xy} \left| h(y)-h(x)\right|^{p-2} \left(h(y)-h(x)\right) = 0. 
\end{equation}
We say that $h$ is $p$-\textbf{harmonic} (or a \textbf{potential}) if it is $p$-harmonic for all $x\in V\setminus \{a,b\}$.
\end{defin}
An immediate property of $p$-harmonic functions is the maximum principle:
\begin{thm}[Maximum Principle]
If $h$ is $p$-harmonic, then
\[\max_{x\in V} h(x) = \max_{x\in\{a,b\}} h(x).\]
%In particular, the only $p$-harmonic function satisfying $h(a)=h(b)=0$ is the constant function $h=0$.
\end{thm}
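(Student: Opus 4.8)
The plan is to run the standard ``propagation of maxima'' argument, but to drive it by the sign structure of the map $U(u)=|u|^{p-2}u$ rather than by any averaging/linearity as one would do when $p=2$. Set $M=\max_{x\in V}h(x)$ and let $S=\{x\in V:h(x)=M\}$, which is nonempty since $V$ is finite. Since $\max_{x\in\{a,b\}}h(x)\leq M$ holds trivially, it suffices to show that $S\cap\{a,b\}\neq\emptyset$.

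First I would record the elementary property of $U$ that the whole argument rests on: for $p>1$ the map $u\mapsto U(u)=|u|^{p-2}u=|u|^{p-1}\,\mathrm{sign}(u)$ is continuous and strictly increasing, and satisfies $U(u)\leq 0$ if and only if $u\leq 0$, with $U(u)=0$ if and only if $u=0$. Next comes the key local step. Suppose $x_0\in S$ is an interior vertex, i.e.\ $x_0\in V\setminus\{a,b\}$. Because $h(x_0)=M$ is the global maximum, each difference $h(y)-h(x_0)$ is $\leq 0$, so by the sign property above every summand $c_{x_0y}\,U(h(y)-h(x_0))$ in the harmonicity relation \eqref{eq:harmonic} is $\leq 0$ (here we use that the conductances are positive, which is implicit in the definition via $r_{xy}=c_{xy}^{-1}$). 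As the entire sum vanishes, each summand must vanish; since $c_{x_0y}>0$ this forces $U(h(y)-h(x_0))=0$, hence $h(y)=h(x_0)=M$. Thus every neighbor of an interior maximizer is again a maximizer, i.e.\ lies in $S$.

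The conclusion then follows by connectedness. Assume for contradiction that $S\cap\{a,b\}=\emptyset$, so that every vertex of $S$ is interior. Fix any $x_0\in S$ and, say, the vertex $a$; by connectedness there is a path $x_0=y_0,y_1,\dots,y_k=a$. Walking along this path and applying the local step inductively (each $y_i\in S$ is interior, since $S$ avoids $\{a,b\}$, and therefore forces its successor $y_{i+1}$ into $S$) yields $a=y_k\in S$, contradicting $S\cap\{a,b\}=\emptyset$. Hence $S$ meets $\{a,b\}$ and $M=\max_{x\in\{a,b\}}h(x)$, as claimed.

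There is no deep obstacle here: the only point requiring care is to replace the ``$h(x)$ equals a conductance-weighted average of its neighbors'' reasoning available at $p=2$ by the monotonicity/sign argument, which works uniformly for all $p>1$. One should in particular double-check the range $1<p<2$, where $|u|^{p-2}\to\infty$ as $u\to 0$; but $U(u)=|u|^{p-1}\,\mathrm{sign}(u)$ remains continuous with $U(0)=0$ and the sign property is unaffected, so the argument goes through without change.
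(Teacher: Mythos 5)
Your proof is correct and follows essentially the same route as the paper: propagate the maximum from an interior maximizer to its neighbors using the sign of $U(u)=|u|^{p-2}u$ together with positivity of the conductances, then use connectedness to reach $\{a,b\}$. You simply spell out the sign/monotonicity step and the path induction that the paper's one-line argument leaves implicit.
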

\begin{proof}
By \eqref{eq:harmonic}, if there exists $x\notin\{a,b\}$ that is a global maximum then for all $y\sim x$ we have $h(x)=h(y)$. Since $G$ is connected, we find by repeating this observation that $h$ is constant. Therefore the maximum is always attained in $\{a,b\}$. %For the second part of the theorem, it is enough to see that $-h$ is $p$-harmonic as well and apply the maximum principle to $h$ and $-h$.
\end{proof}
\begin{remark} \label{rk:pharmonicEverywhere}
The last argument also entails that if $h$ is $p$-harmonic at every $x\in V$, then $h$ must be constant since $V$ is finite.
\end{remark}
The maximum principle implies in particular that the only $p$-harmonic function satisfying $h(a)=h(b)=0$ is the constant function $h=0$. In fact, one can show that there is an unique $p$-harmonic function up to affine transformations. In order to see that, one can rely on the following fundamental comparison principle of $p$-harmonic functions:

\begin{thm}[Comparison principle. Theorem 3.4 in \cite{HS97}] If $h$ and $g$ are two $p$-harmonic functions such that $h(a)\leq g(a)$ and $h(b)\leq g(b)$, then $h(x)\leq g(x)$ for all $x\in V$.
\end{thm}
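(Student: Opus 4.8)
The plan is to mimic the maximum principle proof given just above, but applied to the difference $w = h - g$. The essential difficulty is that the operator is nonlinear, so $w$ is \emph{not} $p$-harmonic and one cannot subtract the two harmonicity relations directly; the whole argument instead rests on the strict monotonicity of $U(t) = |t|^{p-2}t$, which for $p>1$ is a strictly increasing bijection of $\mathbb{R}$. I would argue by contradiction: assume $m := \max_{x\in V} w(x) > 0$ and study the level set $M = \{x\in V : w(x) = m\}$, which is nonempty.

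Then I would show that $M$ cannot avoid $\{a,b\}$. Take any $x\in M\setminus\{a,b\}$; there both $h$ and $g$ are $p$-harmonic, so
\[
\sum_{y\sim x} c_{xy}\, U\big(h(y)-h(x)\big) = 0 = \sum_{y\sim x} c_{xy}\, U\big(g(y)-g(x)\big).
\]
For every neighbor $y$, the inequality $w(y)\le w(x)=m$ gives $h(y)-h(x)\le g(y)-g(x)$, hence $U(h(y)-h(x))\le U(g(y)-g(x))$ by monotonicity, and, multiplying by $c_{xy}\ge 0$, each summand on the left is dominated by the corresponding summand on the right. Since the two sums coincide (both are zero), the nonnegative quantities $c_{xy}\big[U(g(y)-g(x))-U(h(y)-h(x))\big]$ sum to zero and must therefore all vanish. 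As $c_{xy}>0$ on edges, this forces $U(g(y)-g(x))=U(h(y)-h(x))$, and strict monotonicity of $U$ then yields $w(y)=w(x)=m$, i.e.\ $y\in M$.

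Thus every interior point of $M$ has all of its neighbors in $M$. If $M\cap\{a,b\}=\emptyset$, then every point of $M$ is interior, so $M$ is closed under adjacency; the connectedness of $G$ then forces $M=V$, which is absurd since it would place $a$ in $M$. Hence $M$ must meet $\{a,b\}$, but then $w(a)=m>0$ or $w(b)=m>0$, contradicting $h(a)\le g(a)$ and $h(b)\le g(b)$. Therefore $m\le 0$, that is $h\le g$ on all of $V$.

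I expect the only genuinely delicate point to be the passage from ``each left summand is $\le$ the corresponding right summand, and the totals agree'' to ``all the summands are equal'': it is precisely here that one uses both $c_{xy}>0$ and the \emph{strict} (not merely weak) monotonicity of $U$, and it is the nonlinearity of $U$ that rules out the simpler linear subtraction argument available when $p=2$. Everything else is a direct transcription of the maximum-principle propagation already carried out in the excerpt.
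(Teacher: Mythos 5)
Your argument is correct. Note first that the paper does not actually prove this statement: it is quoted as Theorem~3.4 of the reference [HS97] and used as a black box, so your self-contained proof is doing genuinely more than the text. The proof itself is the standard strong-maximum-principle argument for nonlinear (monotone) operators, and every step checks out: the pointwise comparison of summands via the monotonicity of $U(t)=|t|^{p-2}t$, the ``nonnegative terms summing to zero must all vanish'' step, the use of \emph{strict} monotonicity (injectivity) of $U$ to upgrade equality of $U$-values to equality of increments and hence to propagate the level set $M=\{w=m\}$ across edges, and the connectivity argument forcing $M$ to meet $\{a,b\}$. You have also correctly identified the crux, namely that one cannot subtract the two harmonicity identities when $p\neq 2$ and must instead exploit monotonicity edge by edge. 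The only caveat is your assumption $c_{xy}>0$ on every edge: the paper formally allows $c_{xy}=0$ (it writes ``when $c_{xy}\neq 0$'' elsewhere), and with vanishing conductances the propagation step, and indeed the comparison principle itself, can fail unless the subgraph of positive-conductance edges is still connected. But the paper's own proof of the Maximum Principle makes exactly the same implicit assumption, so this is a shared convention rather than a gap in your argument.
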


\begin{cor}\label{linear transformation}
Let $h$ be a $p$-harmonic function such that $h(a)=0$ and $h(b)=1$. Then, for every $p$-harmonic function $g$, there exist $\alpha,\beta\in \mathbb R$ such that $g = \alpha h+\beta $.
\end{cor}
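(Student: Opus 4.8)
The plan is to exhibit the affine function explicitly and then invoke the comparison principle to show it must coincide with $g$. The first observation I would record is that the defining relation \eqref{eq:harmonic} is homogeneous and translation invariant, so $p$-harmonicity is preserved under affine reparametrisations. Concretely, for any $\alpha,\beta\in\R$, substituting $\alpha h+\beta$ into the left-hand side of \eqref{eq:harmonic} yields $|\alpha|^{p-2}\alpha$ times the expression for $h$: the additive constant $\beta$ cancels inside every difference $h(y)-h(x)$, and the factor $\alpha$ can be pulled out using the homogeneity of the map $u\mapsto |u|^{p-2}u$. Hence if $h$ is $p$-harmonic then so is $\alpha h+\beta$ (when $\alpha=0$ the function is constant and satisfies \eqref{eq:harmonic} trivially).

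Given a $p$-harmonic $g$, I would then choose the affine coefficients so that the candidate matches $g$ at the source and the sink. Setting $\alpha=g(b)-g(a)$ and $\beta=g(a)$, I define $\tilde g=\alpha h+\beta$. By the previous step $\tilde g$ is $p$-harmonic, and by construction $\tilde g(a)=\beta=g(a)$ and $\tilde g(b)=\alpha+\beta=g(b)$, using the normalisation $h(a)=0$ and $h(b)=1$.

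Finally I would conclude that $g=\tilde g$ on all of $V$ by applying the comparison principle in both directions. Since $g(a)\le\tilde g(a)$ and $g(b)\le\tilde g(b)$ (in fact with equality), the comparison principle gives $g\le\tilde g$ on $V$; exchanging the roles of $g$ and $\tilde g$ gives $\tilde g\le g$ on $V$. Therefore $g=\tilde g=\alpha h+\beta$, which is exactly the claimed affine representation.

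The one point requiring care — and the reason this is not a one-line linear-algebra argument — is that for $p\neq 2$ the equation \eqref{eq:harmonic} is nonlinear, so the difference $g-\tilde g$ of two $p$-harmonic functions need \emph{not} be $p$-harmonic, and one cannot reason via a space of harmonic functions vanishing at $a$ and $b$. The substitute for linearity is precisely the combination of the scaling and translation invariance of \eqref{eq:harmonic} (used to produce $\tilde g$) with the comparison principle (used to pin it down). Each ingredient is elementary once identified, so I expect no genuine obstacle beyond correctly matching the boundary values before applying the comparison principle twice.
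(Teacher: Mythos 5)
Your proof is correct and follows essentially the same route as the paper: define $\tilde g=\bigl(g(b)-g(a)\bigr)h+g(a)$, observe it is $p$-harmonic with the same boundary values as $g$, and conclude $g=\tilde g$ via the comparison principle. Your version is slightly more explicit than the paper's (spelling out the affine invariance of \eqref{eq:harmonic} and the two-sided application of the comparison principle), but there is no substantive difference.
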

\begin{proof}
Let $\beta = g(a)$ and $\alpha = g(b)-g(a)$. Then, the function $\tilde g := \alpha h + \beta$ is also $p$-harmonic and satisfies $g (a) = \tilde g(a)$ and $ g (b) = \tilde g(b)$. By the comparison principle, it must be that $g = \tilde g$.
\end{proof}

\begin{defin} \label{def:flow}
A \emph{flow from $a$ to $b$} -- or simply \emph{flow} -- is a function $\theta(\vec{xy})$ on directed edges that is antisymmetric, in the sense that for all $x,y\in V$, $\theta(\vec{xy}) = - \theta(\vec{yx})$, and which satisfies the \emph{node law}:
\begin{equation} \label{eq:node_law}
\forall x\in G\setminus \{a,b\}, \quad \sum_{y\sim x} \theta(\vec{xy}) = 0.
\end{equation}
\end{defin}

If $h$ is a $p$-harmonic function, one can define a flow $I$ that we call the \emph{current flow} associated to $h$ by the relation:
\begin{equation} \label{eq:nonlinearOhmLaw}
I(\vec{xy}) = c_{xy} \left|h(y)-h(x)\right|^{p-2} \left(h(y)-h(x)\right).
\end{equation}

We say that a flow $\theta$ satisfies the \emph{cycle law} if for all $x_0,\dots,x_n\in V$ such that $x_n=x_0$, and $\vec{e_i} = \overrightarrow{x_{i-1} x_i}$,
\begin{equation} \label{eq:cycle_law}
\sum_{i=1}^n r_{ {e_i}}^{\frac 1 {p-1}} \frac{\theta\left(\vec {e_i}\right)}{\left| \theta\left(\vec {e_i}\right)\right|^{\frac{p-2}{p-1}}} = 0.
\end{equation}
Note that we set the summand in the above sum to be $0$ whenever $\theta\left(\vec {e_i}\right) = 0$.
By definition, any current flow $I$ satisfies the cycle law. Conversely, any flow that satisfies the cycle law is a current flow:

\begin{thm} \label{th:equivalenceHarmonicFlow}
Let $\theta$ be a flow satisfying the cycle law \eqref{eq:cycle_law}. Then, there exists a $p$-harmonic function $h$ such that 
\begin{equation} \label{eq:flow_to_pharmonic}
\theta(\vec{xy}) = c_{xy} \left|h(y)-h(x)\right|^{p-2} \left(h(y)-h(x)\right).
\end{equation}
Moreover, $p$-harmonic functions satisfying \eqref{eq:flow_to_pharmonic} are unique up to an additive constant.
\end{thm}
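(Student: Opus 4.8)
The plan is to invert the nonlinear Ohm law and recognize the cycle law as exactly the consistency condition needed to integrate a potential. Writing $U(u)=|u|^{p-2}u$, note that $U$ is a continuous, strictly increasing bijection of $\R$ (for $p>1$) with inverse $U^{-1}(v)=\mathrm{sign}(v)\,|v|^{1/(p-1)}$. The relation \eqref{eq:flow_to_pharmonic} we want to solve reads $\theta(\vec{xy})=c_{xy}\,U\!\big(h(y)-h(x)\big)$, which, since $c_{xy}>0$, is equivalent to
\[
h(y)-h(x)=U^{-1}\!\big(r_{xy}\,\theta(\vec{xy})\big)=:\delta(\vec{xy}).
\]
Thus the problem reduces to showing that the prescribed edge increments $\delta$ are realized as differences of a single function $h$ on vertices.

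First I would record two elementary facts about $\delta(\vec{xy})=r_{xy}^{1/(p-1)}\,\mathrm{sign}(\theta(\vec{xy}))\,|\theta(\vec{xy})|^{1/(p-1)}$. It is antisymmetric, because $\theta$ is antisymmetric and $r_{xy}=r_{yx}$; and, crucially, the summand appearing in the cycle law \eqref{eq:cycle_law} equals $\delta(\vec{e_i})$, since $\theta/|\theta|^{(p-2)/(p-1)}=\mathrm{sign}(\theta)\,|\theta|^{1/(p-1)}$ (using $1-\tfrac{p-2}{p-1}=\tfrac1{p-1}$). Hence the cycle law states precisely that $\sum_{i=1}^n \delta(\vec{e_i})=0$ around every cycle, i.e.\ that $\delta$ has vanishing circulation.

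Next I would construct $h$ by summing increments along paths. Fix a base vertex, say $a$, set $h(a)=0$, and for any $x$ choose a path $a=x_0,\dots,x_k=x$ (which exists by connectedness) and put $h(x)=\sum_{i=1}^k \delta(\overrightarrow{x_{i-1}x_i})$. The cycle law guarantees that this is independent of the chosen path: any two paths from $a$ to $x$ differ by a cycle, around which the $\delta$-sum vanishes (using antisymmetry to reverse one of the paths). Appending the edge $\vec{xy}$ to a path reaching $x$ then gives $h(y)-h(x)=\delta(\vec{xy})$ for every edge $\langle x,y\rangle$. Applying $U$ and multiplying by $c_{xy}$ recovers $\theta(\vec{xy})=c_{xy}\,U\!\big(h(y)-h(x)\big)$, i.e.\ \eqref{eq:flow_to_pharmonic}; and since $\theta$ obeys the node law \eqref{eq:node_law}, summing this identity over $y\sim x$ shows that $h$ satisfies \eqref{eq:harmonic} at every $x\notin\{a,b\}$, so $h$ is $p$-harmonic.

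Finally, uniqueness is immediate: if $h$ and $g$ both satisfy \eqref{eq:flow_to_pharmonic}, then $c_{xy}\,U\!\big(h(y)-h(x)\big)=c_{xy}\,U\!\big(g(y)-g(x)\big)$ on every edge, so injectivity of $U$ (and $c_{xy}>0$) forces $h(y)-h(x)=g(y)-g(x)$; connectedness then makes $h-g$ constant. I expect the only real content to be the second step, namely the algebraic rewriting that turns the cycle law into the statement that $\delta$ has zero circulation, after which the construction of $h$ is the standard argument that a closed increment on a connected graph admits a primitive.
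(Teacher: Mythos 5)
Your proposal is correct and follows essentially the same route as the paper: define $h$ by summing the increments $r_{xy}^{1/(p-1)}\theta(\vec{xy})/|\theta(\vec{xy})|^{(p-2)/(p-1)}$ (your $\delta(\vec{xy})=U^{-1}(r_{xy}\theta(\vec{xy}))$ is exactly this quantity) along paths from $a$, use the cycle law for path-independence, recover the Ohm relation and deduce $p$-harmonicity from the node law, and get uniqueness up to a constant because the increments of $h$ are determined by $\theta$. The only cosmetic difference is that you phrase the construction via the inverse map $U^{-1}$, which also handles the $\theta=0$ edges cleanly without the paper's explicit convention.
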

\begin{proof}
For any $x\in V$, let $a=x_0,x_1,\dots,x_k=x$ be any path from $a$ to $x$ and define
\begin{equation} \label{eq:primitive_h}
h(x) = h(a) + \sum_{i=1}^k r_{ {e_i}}^{\frac 1 {p-1}} \frac{\theta\left(\vec {e_i}\right)}{\left| \theta\left(\vec {e_i}\right)\right|^{\frac{p-2}{p-1}}}\,.
\end{equation}
Note that by the cycle law, $h$ is well defined.
For all $x\sim y$, we have
\begin{equation} \label{eq:diff_h}
h(y)-h(x) = r_{xy}^{\frac 1 {p-1}} \frac{\theta\left(\vec {xy}\right)}{\left| \theta\left(\vec {xy}\right)\right|^{\frac{p-2}{p-1}}}\,,
\end{equation}
so that \eqref{eq:flow_to_pharmonic} is verified  and, since $c_{xy} r_{xy}=1$ when $c_{xy}\neq 0$,
\begin{align*}
& \sum_{y\sim x} c_{xy} \left| h(y)-h(x)\right|^{p-2} \left(h(y)-h(x)\right) \\
& = \sum_{y\sim x} c_{xy} r_{xy}^{\frac 1 {p-1}} r_{xy}^{\frac {p-2} {p-1}} \frac{|\theta\left(\vec {xy}\right)|^{p-2}\theta\left(\vec {xy}\right)}{\left| \theta\left(\vec {xy}\right)\right|^{\frac{p-2}{p-1}(p-2)}\left| \theta\left(\vec {xy}\right)\right|^{\frac{p-2}{p-1}}}\\
& = \sum_{y\sim x} \theta\left(\vec {xy}\right) =0,
\end{align*}
which proves the first part of the theorem. Suppose now that another $p$-harmonic function $h_1$ satisfies \eqref{eq:flow_to_pharmonic}. The function $h_1$ therefore  satisfies \eqref{eq:diff_h} and in turn \eqref{eq:primitive_h}, which entails that $h_1=h+h_1(a)-h(a)$.
\end{proof}

Define the \emph{strength} of a flow by $\Vert \theta \Vert := \sum_{x\sim a} \theta(\vec{ax})$. Note that since $\theta$ is antisymmetric, one has $\sum_{y\in V}\sum_{x\sim y} \theta(\vec{xy}) = 0$, so by the node law we find that $\Vert \theta \Vert = \sum_{x\sim b} \theta(\vec{xb})$. 
\begin{thm} \label{th:strengthEquality}
Let $\theta$ and $\eta$ be two flows that satisfy the cycle law. If $\Vert \theta \Vert = \Vert \eta \Vert$, then $\theta = \eta$.
\end{thm}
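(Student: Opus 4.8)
The plan is to reduce the statement to the rigidity of $p$-harmonic functions established above. Since both $\theta$ and $\eta$ satisfy the cycle law, Theorem \ref{th:equivalenceHarmonicFlow} produces $p$-harmonic functions $h$ and $g$, each unique up to an additive constant, with $\theta(\vec{xy}) = c_{xy}|h(y)-h(x)|^{p-2}(h(y)-h(x))$ and the analogous relation for $\eta$ and $g$. Proving $\theta=\eta$ then amounts to showing that the increments $h(y)-h(x)$ and $g(y)-g(x)$ agree along every edge, and the strength hypothesis is what I would use to pin this down.

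First I would dispose of the degenerate case $\Vert\theta\Vert=\Vert\eta\Vert=0$. Here I claim that a cycle-law flow of strength $0$ must vanish: writing $\Vert\theta\Vert=\sum_{x\sim a}\theta(\vec{ax})=0$ and, using antisymmetry together with the node law, $\sum_{y\sim b}\theta(\vec{by})=-\Vert\theta\Vert=0$, one sees that the node law \eqref{eq:node_law} in fact holds at the source $a$ and the sink $b$ as well, hence at every vertex. By the Ohm law \eqref{eq:flow_to_pharmonic} this means $h$ is $p$-harmonic everywhere, so $h$ is constant by Remark \ref{rk:pharmonicEverywhere}, whence $\theta=0$; the same reasoning gives $\eta=0$. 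Thus $\theta=\eta$ in this case.

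When $\Vert\theta\Vert\neq 0$ the flow $\theta$ is nonzero, so by the previous paragraph $h$ cannot be constant, and the maximum principle forces $h(a)\neq h(b)$ (equality would make $h$ attain both its maximum and minimum at the same value, hence be constant). Normalizing via the affine map $\hat h=(h-h(a))/(h(b)-h(a))$, which preserves $p$-harmonicity, I can invoke Corollary \ref{linear transformation} with $\hat h$ as reference to conclude that $g$ is an affine function of $h$, say $g=\alpha h+\beta$. Substituting into the Ohm law and factoring out the scalar yields $\eta(\vec{xy})=|\alpha|^{p-2}\alpha\,\theta(\vec{xy})=U(\alpha)\,\theta(\vec{xy})$ on every directed edge, so that $\Vert\eta\Vert=U(\alpha)\Vert\theta\Vert$. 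Since $\Vert\eta\Vert=\Vert\theta\Vert\neq 0$, this gives $U(\alpha)=1$; and because $U(u)=|u|^{p-2}u$ is a strictly increasing bijection of $\mathbb{R}$ for $p>1$ with $U(1)=1$, we get $\alpha=1$ and therefore $\eta=\theta$.

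The only genuine subtlety I anticipate is the degenerate zero-strength case, which escapes the scaling argument entirely and must be treated separately through the observation that vanishing strength upgrades the node law to hold at the source and sink, reducing matters to Remark \ref{rk:pharmonicEverywhere}. The remainder is the routine bookkeeping of the scalar $U(\alpha)$ together with the strict monotonicity of $U$, which is exactly the place where the hypothesis $p>1$ is used.
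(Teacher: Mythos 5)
Your proof is correct and follows essentially the same route as the paper: pass to the associated $p$-harmonic functions via Theorem \ref{th:equivalenceHarmonicFlow}, handle the zero-strength case through Remark \ref{rk:pharmonicEverywhere}, and otherwise use the affine rigidity of Corollary \ref{linear transformation} together with the strength hypothesis to force the scaling factor to be $1$. If anything you are slightly more careful than the paper's own write-up, which states $\eta=\alpha\theta$ where the correct scalar is $U(\alpha)=|\alpha|^{p-2}\alpha$; your treatment of this point via the strict monotonicity of $U$ is the right one.
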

\begin{proof}
Let $h$ be a $p$-harmonic function satisfying \eqref{eq:flow_to_pharmonic}, and let $g$ be a $p$-harmonic function associated to $\eta$ via the same relation. If $h$ is constant, then $\Vert \theta \Vert = 0$ so that $\Vert \eta \Vert = 0$. Therefore $g$ is $p$-harmonic at every $x\in V$, and by Remark \ref{rk:pharmonicEverywhere} one finds that $g$ is constant and thus $\theta = \eta = 0$.

If $h$ is non-constant, then by the uniqueness property there exist $\alpha,\beta\in \mathbb R$ such that $g=\alpha h + \beta$. Hence $\eta = \alpha \theta$, and since $\Vert \theta \Vert\neq 0$ (otherwise $h$ would be constant), we obtain from $\Vert \eta \Vert = \Vert \theta \Vert$ that $\eta = \theta$.
\end{proof}

\subsection{The effective resistance}
\begin{thm} \label{th:defEffectiveResistance}
Let $h$ be a non-constant $p$-harmonic function and $I$ be its current flow. The ratio:
\begin{equation} \label{eq:effect_res_def}
R_{p}(a,b):= \frac{|h(b)-h(a)|^{p-2}(h(b)-h(a))}{\Vert I \Vert},
\end{equation}
is well defined, non-zero and does not depend on the choice of $h$.
\end{thm}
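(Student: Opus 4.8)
The statement bundles three assertions: that the denominator $\Vert I\Vert$ is nonzero (so that the ratio is well defined), that the numerator is nonzero, and that the resulting value does not depend on which non-constant $p$-harmonic function $h$ we use. My plan is to settle the two non-vanishing claims with the Maximum Principle and the uniqueness result of Theorem \ref{th:strengthEquality}, and then to obtain the independence from the affine rigidity recorded in Corollary \ref{linear transformation} together with the homogeneity of the nonlinear Ohm law \eqref{eq:nonlinearOhmLaw}.

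First I would show the numerator does not vanish, i.e.\ $h(a)\neq h(b)$. Since the defining relation \eqref{eq:harmonic} is odd, $-h$ is $p$-harmonic whenever $h$ is, so the Maximum Principle applied to both $h$ and $-h$ gives $\max_V h=\max_{\{a,b\}}h$ and $\min_V h=\min_{\{a,b\}}h$. If $h(a)=h(b)$ these two extrema coincide and $h$ is constant, contrary to assumption; hence $h(a)\neq h(b)$ and $|h(b)-h(a)|^{p-2}(h(b)-h(a))\neq 0$. For the denominator, I argue by contradiction: suppose $\Vert I\Vert=0$. The current flow $I$ satisfies the cycle law, and the zero flow also satisfies it and has strength $0$, so Theorem \ref{th:strengthEquality} forces $I\equiv 0$. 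By \eqref{eq:nonlinearOhmLaw} this means $h(y)=h(x)$ across every edge of positive conductance, and connectedness of $G$ then makes $h$ constant, again a contradiction. Thus $\Vert I\Vert\neq 0$, and the ratio is well defined and non-zero. (A clean alternative for this step is a summation-by-parts identity: pairing directed edges and using the node law gives $(h(b)-h(a))\,\Vert I\Vert=\sum_{\langle x,y\rangle}c_{xy}|h(y)-h(x)|^{p}$, whose right-hand side is strictly positive for non-constant $h$; this simultaneously yields $\Vert I\Vert\neq 0$ and the sign relation $R_p(a,b)>0$.)

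For the independence of $h$, let $g$ be another non-constant $p$-harmonic function. I would first normalize, setting $h_0=(h-h(a))/(h(b)-h(a))$, which is $p$-harmonic (being an affine image of $h$) and satisfies $h_0(a)=0,\ h_0(b)=1$. Corollary \ref{linear transformation} then expresses $g$ as an affine function of $h_0$, hence $g=\alpha h+\beta$ for some $\alpha,\beta\in\mathbb R$ with $\alpha\neq 0$ since $g$ is non-constant. Substituting $g(y)-g(x)=\alpha\,(h(y)-h(x))$ into \eqref{eq:nonlinearOhmLaw} gives $I_g=|\alpha|^{p-2}\alpha\,I$ edgewise, whence $\Vert I_g\Vert=|\alpha|^{p-2}\alpha\,\Vert I\Vert$; the same homogeneity applied to $g(b)-g(a)=\alpha\,(h(b)-h(a))$ multiplies the numerator by the identical factor $|\alpha|^{p-2}\alpha$. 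These factors cancel in the quotient, so $R_p(a,b)$ computed from $g$ coincides with the value computed from $h$.

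The computation is essentially bookkeeping once the structural results are in place, so I do not expect any genuine obstacle. The one point that needs care — rather than a direct hands-on argument — is the non-vanishing of $\Vert I\Vert$, where one must pass through Theorem \ref{th:strengthEquality} (or the energy identity above) instead of trying to read off $\Vert I\Vert\neq 0$ from the potential difference directly, and one must remember to invoke connectedness to upgrade ``$h$ constant across each edge'' to ``$h$ globally constant.''
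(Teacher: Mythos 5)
Your proof is correct, and it reaches the same three conclusions as the paper but by slightly different routes in two places. For $\Vert I\Vert\neq 0$ the paper is more economical: it notes that $\Vert I\Vert=\sum_{x\sim a}I(\vec{ax})=0$ (together with the matching identity at $b$) is precisely the $p$-harmonicity condition at $a$ and $b$, so $h$ would be $p$-harmonic everywhere and hence constant by Remark \ref{rk:pharmonicEverywhere}; your detour through Theorem \ref{th:strengthEquality} (comparing $I$ with the zero flow) is valid but invokes heavier machinery than needed, and your parenthetical summation-by-parts identity is exactly the computation the paper later performs in the proof of Dirichlet's principle. For the independence of $h$, the paper works on the flow side: it normalizes $\bar h_i=h_i/\Vert I_i\Vert^{1/(p-1)}$ so that both current flows have unit strength, concludes they coincide by Theorem \ref{th:strengthEquality}, and then that the potentials differ by a constant via Theorem \ref{th:equivalenceHarmonicFlow}. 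You work on the potential side: normalize boundary values, apply Corollary \ref{linear transformation} to get $g=\alpha h+\beta$, and track the common factor $|\alpha|^{p-2}\alpha$ through numerator and denominator. The two arguments are dual to each other and rest on the same underlying input (the comparison principle behind Corollary \ref{linear transformation}, which Theorem \ref{th:strengthEquality} also uses); yours is arguably more direct and makes the homogeneity cancellation explicit. One small point of care in your version: you should note that the affine renormalization $h_0=(h-h(a))/(h(b)-h(a))$ is again $p$-harmonic because \eqref{eq:harmonic} is translation-invariant and homogeneous of degree $p-1$ --- you use this implicitly when applying Corollary \ref{linear transformation}.
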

\begin{proof}
First observe that if $h$ is a non-constant $p$-harmonic function, then $\Vert I \Vert$ must be non-zero, otherwise $h$ would be $p$-harmonic at every point $x\in V$ and by Remark \ref{rk:pharmonicEverywhere}, $h$ would be constant. Moreover, $h(a)$ has to be different from $h(b)$ by the maximum principle. 

Now let  $h_1,h_2$ be two non-constant $p$-harmonic functions  and $I_1,I_2$ be their associated current flows. Up to exchanging $h_1$ with $-h_1$ or $h_2$ with $-h_2$, one can assume that both $\Vert I_1\Vert$ and $\Vert I_2\Vert$ are positive. Denote by $\bar h_1 = h_1/\Vert I_1 \Vert^{\frac{1}{p-1}}$ and $\bar h_2 = h_2/\Vert I_2 \Vert^{\frac{1}{p-1}}$. Further let $\theta_1,\theta_2$ be the current flows associated to $\bar h_1,\bar h_2$. We have 
\[\Vert \theta_1 \Vert = \Vert I_1\Vert^{-1} \sum_{x\sim a} c_{ax} \left|h_1(x)-h_1(a)\right|^{p-2} \left(h_1(x)- h_1(a)\right) = 1, \]
and the same holds for $\theta_2$, so that by Theorem \ref{th:strengthEquality} we have $\theta_1 = \theta_2$. Hence by Theorem \ref{th:equivalenceHarmonicFlow}, $\bar h_1$ and $\bar h_2$ only differ from an additive constant, so in particular the ratios \eqref{eq:effect_res_def} with $h_1$ and $h_2$ are equal.
\end{proof}

We call $R_{p}(a,b)$ the $p$-\emph{effective resistance} and its inverse $C_{p}(a,b):= R_{p}(a,b)^{-1}$ the $p$-\emph{effective conductance} or $p$-\emph{capacity}. The following Dirichlet and Thomson principles offer effective ways of estimating the $p$-effective resistance and the $p$-capacity.

\begin{thm}[Dirichlet's principle] \label{th:Dirichlet}
One has: 
\[C_{p}(a,b) = \inf_{h\colon V\to \mathbb R}\left\{\frac{1}{2}\sum_{x\in V} \sum_{y\sim x} c_{xy}|h(y)-h(x)|^p, h(a)=0,h(b)=1\right \}.\]
Moreover, the infimum is attained at a $p$-harmonic function $h$.
\end{thm}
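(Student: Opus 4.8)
The plan is to use the direct method of the calculus of variations. Write $\mathcal{E}(h) = \frac12\sum_{x\in V}\sum_{y\sim x} c_{xy}|h(y)-h(x)|^p$ for the Dirichlet energy and let $\mathcal{K}=\{h\colon V\to\mathbb R : h(a)=0,\,h(b)=1\}$ be the affine constraint set. First I would show that the infimum over $\mathcal{K}$ is attained. Since $p>1$, the map $t\mapsto|t|^p$ is continuous and convex, so $\mathcal{E}$ is a continuous (indeed convex) function on $\mathbb R^V\cong\mathbb R^{|V|}$. The decisive point is coercivity: because $G$ is connected and $h(a)=0$ is fixed, for any vertex $x$ one can bound $|h(x)|=|h(x)-h(a)|$ by the sum of increments $|h(x_i)-h(x_{i-1})|$ along a path from $a$ to $x$, and each such increment is at most a constant times $\mathcal{E}(h)^{1/p}$. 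Hence the sublevel sets $\{h\in\mathcal{K}:\mathcal{E}(h)\le M\}$ are bounded; being also closed they are compact, and choosing $M$ large enough that the set is nonempty, continuity of $\mathcal{E}$ produces a minimizer $h^\ast\in\mathcal{K}$.

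Next I would identify $h^\ast$ as $p$-harmonic via the Euler--Lagrange equations. The coordinates $h(x)$ with $x\notin\{a,b\}$ are unconstrained, so at the minimizer $\partial_{h(x)}\mathcal{E}(h^\ast)=0$ for every $x\in V\setminus\{a,b\}$. Differentiating (using that $t\mapsto|t|^p$ is $C^1$ with derivative $p|t|^{p-2}t$ for $p>1$), and using the symmetry $c_{xy}=c_{yx}$ so that each edge incident to $x$ is counted twice and cancels the factor $\frac12$, this condition becomes
\[
-p\sum_{y\sim x} c_{xy}\,|h^\ast(y)-h^\ast(x)|^{p-2}\bigl(h^\ast(y)-h^\ast(x)\bigr)=0,
\]
which is exactly the $p$-harmonicity relation \eqref{eq:harmonic}. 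Thus $h^\ast$ is $p$-harmonic, and since $h^\ast(a)=0\neq 1=h^\ast(b)$ it is non-constant; by Corollary \ref{linear transformation} it is moreover the unique element of $\mathcal{K}$ that is $p$-harmonic, so the minimizer is unique.

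The heart of the argument, and the step I expect to require the most care, is to show that the minimal energy equals $C_p(a,b)$ through a discrete summation-by-parts identity. Let $I$ be the current flow of $h^\ast$, so $I(\vec{xy})=c_{xy}|h^\ast(y)-h^\ast(x)|^{p-2}(h^\ast(y)-h^\ast(x))$. Using antisymmetry of $I$ to symmetrize the double sum gives
\[
\sum_{x\in V} h^\ast(x)\sum_{y\sim x} I(\vec{xy})=\frac12\sum_{x\in V}\sum_{y\sim x}\bigl(h^\ast(x)-h^\ast(y)\bigr)\,I(\vec{xy}).
\]
On the right-hand side the identity $(h^\ast(x)-h^\ast(y))I(\vec{xy})=-c_{xy}|h^\ast(y)-h^\ast(x)|^p$ collapses the sum to $-\mathcal{E}(h^\ast)$. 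On the left-hand side the node law \eqref{eq:node_law} annihilates every term except $x\in\{a,b\}$, leaving $h^\ast(a)\Vert I\Vert - h^\ast(b)\Vert I\Vert=-\Vert I\Vert$, where I used $h^\ast(a)=0$, $h^\ast(b)=1$, $\sum_{y\sim a}I(\vec{ay})=\Vert I\Vert$ and $\sum_{y\sim b}I(\vec{by})=-\Vert I\Vert$. Comparing the two sides yields $\mathcal{E}(h^\ast)=\Vert I\Vert$. Finally, inserting $h^\ast$ into the definition \eqref{eq:effect_res_def} of the effective resistance and using $h^\ast(b)-h^\ast(a)=1$ gives $R_p(a,b)=\Vert I\Vert^{-1}$, hence $\mathcal{E}(h^\ast)=\Vert I\Vert=R_p(a,b)^{-1}=C_p(a,b)$, which is the claim. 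The only delicate points are the correct bookkeeping of the source and sink divergences and the signs in the energy--flow identity.
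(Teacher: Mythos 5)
Your proof is correct and follows essentially the same route as the paper's: identify the minimizer as $p$-harmonic via the Euler--Lagrange equations, then use discrete summation by parts together with the node law to show the energy equals $\Vert I\Vert = C_p(a,b)$. The only difference is that you spell out the existence of the minimizer via coercivity and compactness, a step the paper leaves implicit.
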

\begin{remark}
This entails that $C_p(a,b)= {\rm Cap}_{G,p}(\{a\},\{b\})$ when $c_{xy} \equiv 1$.
\end{remark}
\begin{remark}
The theorem shows in particular that non-constant $p$-harmonic functions always exist and that the $p$-capacity is always positive.
\end{remark}
\begin{proof}
The fact that it is $p$-harmonic is straightforward by differentiating at all points of $V\backslash\{a,b\}$. Now let $h$ be the $p$-harmonic function minimizing the above infimum. We have
\begin{align*}
&\frac 1 2 \sum_{x\in V} \sum_{y\sim x} c_{xy}|h(y)-h(x)|^p \\
& = \frac 1 2 \sum_{x\in V} \sum_{y\sim x} c_{xy} (h(y)-h(x)) |h(y)-h(x)|^{p-2}(h(y)-h(x))\\
& = \frac 1 2 \sum_{x\in V} \sum_{y\sim x} c_{xy} h(y) |h(y)-h(x)|^{p-2}(h(y)-h(x))\\
& \qquad - \frac 1 2 \sum_{x\in V} \sum_{y\sim x} c_{xy} h(x) |h(y)-h(x)|^{p-2}(h(y)-h(x))\\
& = -\sum_{x\in V} \sum_{y\sim x} c_{xy} h(x) |h(y)-h(x)|^{p-2}(h(y)-h(x)).
\end{align*}
By $p$-harmonicity of $h$, the last sum further equals $-h(a)\Vert I \Vert - h(b)(-\Vert I \Vert)$ where $I$ is the current flow associated to $h$. Since $h(b)=1$ and $h(a)=0$, this equals $C_{p}(a,b)$.
\end{proof}

We say that a flow $\theta$ is a \emph{unit flow} if $\Vert \theta \Vert = 1$. Dirichlet's principle can be restated as a problem involving flows as follows:
\begin{thm}[Thomson's principle] \label{th:Thomson}
When $p>1$, the following holds:
\[R_{p}(a,b) = \inf \left\{\sum_{e\in E} r_e^{\frac{1}{p-1}}| \theta(e)|^{\frac{p}{p-1}}, \,\theta \text{ is a unit flow}\right\}^{p-1}.\]
Moreover, the infimum is attained at a current flow.
\end{thm}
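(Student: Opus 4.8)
The plan is to identify the minimizer of the flow energy $\mathcal{E}(\theta):=\sum_{e\in E} r_e^{1/(p-1)}|\theta(e)|^{p/(p-1)}$ over unit flows with the unit current flow, and then to evaluate its energy. First I would establish existence of a minimizer. The set of unit flows is nonempty, since connectedness of $G$ provides a path from $a$ to $b$ and hence a unit flow, and it is a closed affine subspace of the finite-dimensional space of antisymmetric edge functions, whose underlying direction is the linear space of \emph{cycle flows}, i.e.\ flows $\phi$ with $\Vert\phi\Vert=0$. Since $p/(p-1)>1$, the functional $\mathcal{E}$ is continuous, strictly convex and coercive, so it attains its infimum at a unique unit flow $\theta^*$.

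Next I would write down the first-order optimality condition. The map $t\mapsto|t|^{p/(p-1)}$ is $C^1$ for $p>1$, so $\mathcal{E}$ is differentiable, and for every cycle flow $\phi$ the path $\theta^*+t\phi$ remains in the set of unit flows; stationarity of $\mathcal{E}$ at $\theta^*$ therefore yields
\[
\sum_{e\in E} r_e^{1/(p-1)}\,\frac{\theta^*(e)}{|\theta^*(e)|^{(p-2)/(p-1)}}\,\phi(e)=0
\]
for all cycle flows $\phi$, where I used that $\tfrac{p}{p-1}-2=-\tfrac{p-2}{p-1}$. Choosing $\phi$ to be the flow that sends one unit around a closed walk $x_0,\dots,x_n=x_0$ (a genuine cycle flow, since it satisfies the node law at every vertex and has strength $0$) turns this condition into exactly the cycle law \eqref{eq:cycle_law} for $\theta^*$.

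It then remains to identify $\theta^*$ and compute. By Theorem \ref{th:equivalenceHarmonicFlow}, $\theta^*$ is the current flow of some $p$-harmonic function $h$, and since $\Vert\theta^*\Vert=1$, Theorem \ref{th:strengthEquality} shows that it is the unique unit current flow. Substituting $\theta^*(\vec{xy})=c_{xy}|h(y)-h(x)|^{p-2}(h(y)-h(x))$ and using $r_{xy}^{1/(p-1)}c_{xy}^{p/(p-1)}=c_{xy}$ gives $\mathcal{E}(\theta^*)=\tfrac12\sum_{x}\sum_{y\sim x}c_{xy}|h(y)-h(x)|^p$; the same manipulation as in the proof of Dirichlet's principle (Theorem \ref{th:Dirichlet}), combined with $p$-harmonicity and $\Vert\theta^*\Vert=1$, reduces the right-hand side to $h(b)-h(a)$. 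Comparing with the definition \eqref{eq:effect_res_def} of $R_p(a,b)$ under this normalization, so that $R_p(a,b)=(h(b)-h(a))^{p-1}$, yields $R_p(a,b)=\mathcal{E}(\theta^*)^{p-1}$, which is the claim, the infimum being attained at the current flow $\theta^*$.

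The routine ingredients are the existence of the minimizer and the final energy computation, which merely rescales the calculation already performed for Dirichlet's principle. The main obstacle, and the conceptual heart of the statement, is the middle step: showing that the Euler--Lagrange condition for minimizing $\mathcal{E}$ over flows is precisely the cycle law. This is the flow-side mirror of the fact that the Dirichlet minimizer is $p$-harmonic, and it is what makes it possible to invoke the already-established equivalence between cycle-law flows and current flows.
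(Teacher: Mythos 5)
Your proposal is correct and follows essentially the same route as the paper: derive the cycle law as the first-order optimality condition for the energy over unit flows, identify the minimizer as the unit current flow via Theorems \ref{th:equivalenceHarmonicFlow} and \ref{th:strengthEquality}, and evaluate its energy by reducing to the Dirichlet-principle computation. The only difference is that you make explicit the existence and uniqueness of the minimizer (via coercivity and strict convexity on the affine set of unit flows), a point the paper's proof leaves implicit.
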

\begin{proof}
We first show that any minimizer $\theta$ of the above problem must satisfy the cycle law. Let $\vec e_1,\dots,\vec e_n$ form a cycle and set $\gamma(\vec e_i)= 1$ for all $1\leq i\leq n$, and $\gamma(e)=0$ on all other edges. Note that $\gamma$ defines a flow. Denoting by $\mathcal E(\theta)$ the energy $\sum_{e\in E} r_e^{\frac{1}{p-1}}| \theta(e)|^{\frac{p}{p-1}}$, for all $\varepsilon\in \mathbb R$ we have
\begin{align*}0\leq \mathcal E(\theta + \varepsilon \gamma) - \mathcal E(\theta) & = {\sum_{i=1}^n} r_e^{\frac{1}{p-1}}| (\theta + \varepsilon)(\vec e_i)|^{\frac{p}{p-1}} - \sum_{i=1}^n r_{e_i}^{\frac{1}{p-1}}| \theta(\vec e_i)|^{\frac{p}{p-1}}\\
& =\frac{p}{p-1}\varepsilon \sum_{i=1}^n r_{e_i}^{\frac{1}{p-1}} |\theta(\vec e_i)|^{\frac{p}{p-1}}  + |\varepsilon|^{\frac{p}{p-1}} \sum_{i=1}^n r_{e_i}^{\frac{1}{p-1}}  \mathbf{1}_{\theta(\vec e_i) = 0} + O(\varepsilon^2).
\end{align*}
 By letting $\varepsilon \to 0$ from below and from above, we see that necessarily 
\[\sum_{i=1}^n r_{e_i}^{\frac{1}{p-1}} |\theta(\vec e_i)|^{\frac{p}{p-1}} =0,\]
therefore $\theta$ is a current flow. 

We will now prove that $\mathcal E (\theta)^{p-1} = R_{p}(a,b)$ for the minimizer $\theta$. Let $h$ be the $p$-harmonic function associated to $\theta$ such that $h(a)=0$. We have 
\begin{align*}
\sum_{e\in E} r_e^{\frac 1 {p-1}} |\theta(e)|^{\frac{p}{p-1}} = \frac{1}{2}\sum_{x\in V}\sum_{x\sim y} c_{xy} |h(y)-h(x)|^p. 
\end{align*}
In the last sum, if we replace $h$ by $h/h(b)$ which satisfies $h(b)=1$ and $h(a)=0$, we see from Dirichlet's principle that the sum equals $|h(b)|^{p} C_{p}(a,b)$. Since $\Vert \theta \Vert = 1$, we have $|h(b)|^{p-1} = R_{p}(a,b)$.  Therefore,
\[\left(|h(b)|^{p} C_{p}(a,b)\right)^{p-1}=\left(|h(b)| R_{p}(a,b) C_{p}(a,b)\right)^{p-1}=|h(b)|^{p-1}=R_{p}(a,b).\]

\end{proof}
Let ${\rm Cap}_{G}(A;B)$ denote the capacity between two sets of vertices defined as in \eqref{general lambda}, except that we allow here the presence of a resistance term $r_e$ in the definition of the energy $\sum_{e=\langle x,y\rangle \in E} r_{xy}|f(x)-f(y)|^p$. Thomson's principle can be easily extended to the case where the source and the sink are sets of vertices:
\begin{thm}[Thomson's principle. Set formulation] \label{th:ThomsonForSets}
\begin{equation} \label{eq:thomson_AB}
{\rm Cap}_{G,p}(A;B)^{-1}=\inf \left\{\sum_{e\in E} r_e^{\frac{1}{p-1}}| \theta(e)|^{\frac{p}{p-1}},\, \theta \text{ is a unit flow from $A$ to $B$}\right\}^{p-1},
\end{equation}
where a flow from $A$ to $B$ is defined as in Definition \ref{def:flow} with $a,b$ replaced by $A,B$. Moreover, the infimum is attained at a current flow.
\end{thm}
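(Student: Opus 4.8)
The plan is to reduce the set version to the two-vertex version of Theorem \ref{th:Thomson} by contracting $A$ and $B$ to single vertices. I assume throughout that $A\cap B=\emptyset$, which holds in our application. I first remove the inequality in the boundary condition: given any $f$ admissible for \eqref{general lambda}, its truncation $\tilde f=\max(0,\min(f,1))$ still equals $0$ on $A$ and now equals $1$ on $B$, while $|\tilde f(x)-\tilde f(y)|\le|f(x)-f(y)|$ on every edge so that the energy does not increase. Hence in the definition of ${\rm Cap}_G(A;B)$ one may replace $f\ge 1$ on $B$ by $f=1$ on $B$, and every remaining competitor is automatically constant on $A$ and on $B$.

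Next I would introduce the contracted graph $G'$ obtained from $G$ by identifying all of $A$ with a single source $a$, identifying all of $B$ with a single sink $b$, discarding the self-loops created by edges internal to $A$ or to $B$, and keeping every other edge together with its weight (the resulting multiple edges are allowed by our conventions); $G'$ is again finite and connected. Because each admissible $f$ is constant on $A$ and on $B$, the discarded internal edges contribute nothing to its energy, so $f$ descends to a function $f'$ on $G'$ with $f'(a)=0$, $f'(b)=1$ and the same energy, and this is a bijection between the two classes of competitors. By Dirichlet's principle (Theorem \ref{th:Dirichlet}) applied in $G'$ this identifies ${\rm Cap}_G(A;B)$ with the two-vertex capacity $C_p(a,b)$ in $G'$, so Theorem \ref{th:Thomson} gives
\[
{\rm Cap}_G(A;B)^{-1}=\inf\Big\{\sum_{e\in E'} r_e^{\frac{1}{p-1}}|\theta'(e)|^{\frac{p}{p-1}}\ \Big|\ \theta'\text{ a unit flow }a\to b\text{ in }G'\Big\}^{p-1}.
\]

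The remaining and most delicate point is to match the flows on $G'$ with the flows from $A$ to $B$ on $G$. A flow from $A$ to $B$ (Definition \ref{def:flow} with $\{a,b\}$ replaced by $A\cup B$) obeys the node law only off $A\cup B$ and is unconstrained on the edges internal to $A$ or $B$. Setting $\theta$ to zero on all such internal edges leaves the node law off $A\cup B$ untouched, does not change the strength (the edges with both endpoints in $A$ contribute oppositely signed terms that cancel by antisymmetry), and does not increase the energy; hence the infimum is unchanged when we restrict to flows vanishing on internal edges. Such flows correspond bijectively and energy-preservingly to unit flows on $G'$: the node law off $A\cup B$ becomes verbatim the node law off $\{a,b\}$, and the strength $\sum_{x\in A}\sum_{y\sim x}\theta(\vec{xy})$ equals the net flow $\sum_{x\sim a}\theta'(\vec{ax})$ out of $a$. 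Matching the two infima with the previous display yields \eqref{eq:thomson_AB}, and pulling back the minimizing flow on $G'$ (a current flow by Theorem \ref{th:Thomson}) gives the current flow attaining the infimum on $G$.

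I expect the flow correspondence---in particular checking that the set strength agrees with the two-vertex strength and that nothing is lost on the internal edges---to be the main thing to get right; the truncation and contraction steps are routine.
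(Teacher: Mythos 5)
Your proof is correct and takes essentially the same approach as the paper: contract $A$ to a single source $a$ and $B$ to a single sink $b$, then invoke the two-vertex Thomson principle (Theorem \ref{th:Thomson}). The paper's own proof is only a brief sketch of this reduction, and your extra details---the truncation step, the bijection of competitors, and the careful matching of set flows with flows on the contracted graph (in particular the cancellation on edges internal to $A$ or $B$)---correctly fill in exactly the points the paper leaves implicit.
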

\begin{proof}
Let $A,B\subset G$ such that $A\cap B=\emptyset$. Consider the reduced network where all points $x\in A$ are merged to a single vertex $a$ and all the edges that ended in $A$ now end at $a$ with the same conductances. Do the same for $B$ and merge them to a single vertex $b$. Choose the potential on $A$ and on $B$ to be constant. Theorem \ref{th:Thomson} then implies \eqref{eq:thomson_AB}.
\end{proof}

Of independent interest, we observe that Thomson's principle still holds for the case $p=1$. It can be stated as follows:
\begin{cor}
When $p=1$, we have:
\begin{equation}
{\rm Cap}_{G,1}(A;B)^{-1}=\inf \left\{|r\theta|_\infty,\, \theta \text{ is a unit flow from $A$ to $B$}\right\}.
\end{equation}
\end{cor}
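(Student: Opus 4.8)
The plan is to derive the corollary by passing to the limit $p\to 1^+$ in the set formulation of Thomson's principle, Theorem~\ref{th:ThomsonForSets} (which itself requires $p>1$). For $p>1$ set $\mathcal E_p(\theta)=\sum_{e\in E} r_e^{\frac1{p-1}}|\theta(e)|^{\frac{p}{p-1}}$, so that Theorem~\ref{th:ThomsonForSets} reads ${\rm Cap}_{G}(A;B)^{-1}=(\inf_\theta \mathcal E_p(\theta))^{p-1}$, the infimum being over unit flows from $A$ to $B$. Writing ${\rm Cap}^{(p)}_G(A;B)$ for the capacity associated with the exponent $p$ and $|r\theta|_\infty=\max_{e\in E} r_e|\theta(e)|$, the corollary will follow once I prove the two limits
\[\lim_{p\to 1^+}{\rm Cap}^{(p)}_{G}(A;B)={\rm Cap}^{(1)}_{G}(A;B)\in(0,\infty),\qquad \lim_{p\to 1^+}\big(\inf_\theta \mathcal E_p(\theta)\big)^{p-1}=\inf_\theta |r\theta|_\infty,\]
since then ${\rm Cap}^{(1)}_G(A;B)^{-1}=\lim_{p\to1^+}{\rm Cap}^{(p)}_G(A;B)^{-1}=\lim_{p\to1^+}(\inf_\theta\mathcal E_p(\theta))^{p-1}=\inf_\theta|r\theta|_\infty$.

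For the first limit I would restrict each capacity problem to the compact set $K=\{f\colon V\to[0,1]:\ f|_A=0,\ f|_B=1\}$; truncating an admissible $f$ to $[0,1]$ preserves the constraints and does not increase $\sum_{e}r_e|f(x)-f(y)|^p$, because $t\mapsto (t\wedge 1)\vee 0$ is a contraction. On $K$ every increment $|f(x)-f(y)|$ lies in $[0,1]$, and as $\sup_{t\in[0,1]}|t^p-t|\to 0$ when $p\to 1^+$, the energy functionals converge uniformly on $K$ to $f\mapsto\sum_e r_e|f(x)-f(y)|$; hence their infima converge. The limit is finite, and it is strictly positive because $G$ is connected with positive conductances and $A\cap B=\emptyset$, so any $f\in K$ must have a nonzero increment along some path joining $A$ to $B$.

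The second limit is the heart of the matter. With $s=\frac1{p-1}$ and $b_e=r_e|\theta(e)|^p$ one has $\mathcal E_p(\theta)^{p-1}=(\sum_e b_e^{s})^{1/s}$, and the elementary inequalities $\max_e b_e\le (\sum_e b_e^{s})^{1/s}\le |E|^{1/s}\max_e b_e$ show that, for each fixed unit flow $\theta$, $\mathcal E_p(\theta)^{p-1}\to \max_e r_e|\theta(e)|=|r\theta|_\infty$ as $p\to1^+$; infimizing over $\theta$ gives the easy bound $\limsup_{p\to1^+}\inf_\theta\mathcal E_p(\theta)^{p-1}\le \inf_\theta|r\theta|_\infty$. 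The matching lower bound is the main obstacle, as it requires interchanging the infimum with the limit. The key tool is the left inequality above, namely $\mathcal E_p(\theta)^{p-1}\ge \max_e r_e|\theta(e)|^{p}$, valid for every $p>1$. Letting $\theta_p$ denote the minimizer provided by Theorem~\ref{th:ThomsonForSets}, the easy bound gives $\mathcal E_p(\theta_p)^{p-1}\le C$ for $p$ near $1$, hence $r_e|\theta_p(e)|^{p}\le C$ for all $e$, so the family $(\theta_p)_{p\in(1,2)}$ is uniformly bounded and thus contained in a fixed compact set of flows. Choosing $p_k\to 1^+$ that attains the $\liminf$ and along which $\theta_{p_k}\to\theta_\ast$, the limit $\theta_\ast$ is again a unit flow (antisymmetry, the node law and the strength are preserved under convergence), and since $|\theta_{p_k}(e)|^{p_k}\to|\theta_\ast(e)|$ the a priori bound yields $\liminf_{p\to1^+}\inf_\theta\mathcal E_p(\theta)^{p-1}\ge \max_e r_e|\theta_\ast(e)|=|r\theta_\ast|_\infty\ge \inf_\theta|r\theta|_\infty$. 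This matches the upper bound, establishes the second limit, and completes the argument.
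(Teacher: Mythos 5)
Your proposal is correct and follows essentially the same route as the paper: both approximate the $p=1$ problem by $p=1+\varepsilon$, invoke Theorem~\ref{th:ThomsonForSets} for $p>1$, and pass to the limit using compactness of the (bounded) minimizing flows on the finite graph. The only real difference is in the auxiliary step $\lim_{p\to1^+}{\rm Cap}^{(p)}={\rm Cap}^{(1)}$, where you use uniform convergence of the energies on the truncated constraint set while the paper combines monotonicity in $p$ with a convergent subsequence of harmonic minimizers; your sandwich $\max_e b_e\le(\sum_e b_e^s)^{1/s}\le|E|^{1/s}\max_e b_e$ also makes explicit a limit the paper only asserts.
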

\begin{proof}
%$${\rm Cap}^p(A;B)=\inf_{f: V\to \R}\left\{\sum_{\langle x, y\rangle \in E} |f(x)-f(y)|^{p}\ \middle|~\,f(x)\geq 1~\forall x\in B,\,f(x)=0~\forall x\in A\right\}.$$
First note that the additional restriction $f(x)\in[0,1]$ in \eqref{general lambda} does not change the value of the capacity. Moreover, on this restriction, $\sum r_{xy} |f(x)-f(y)|^{p}$ is decreasing in $p$. Hence,
\[{\rm Cap}_{G,1}(A;B)\geq {\rm Cap}^{1+\e}(A;B).\]
We take   a sequence $\e_k\to 0$ so that 
\[{\rm Cap}_{G,1}(A;B) \geq \liminf_{\e\to 0} {\rm Cap}_{G,1+\e_k}(A;B) =\lim_{k\to\infty} {\rm Cap}_{G,1+\e_k}(A;B).\] Let $h_{\e}$ be a harmonic function for ${\rm Cap}_{G,1+\e}(A;B)$ (i.e.\ $h_\e$ is harmonic on $V\setminus (A\cup B))$. Then there exists a subsequence $\e_k'$ of $\e_k$ and a function $h\colon V\to\R$ such that $h_{\e_k'}(x)\to h^*(x)$ for all $x\in V$. This yields
\al{
  {\rm Cap}_{G,1}(A;B)&\leq \sum_{\langle x, y\rangle \in E} r_{xy} |h^*(x)-h^*(y)|\\
  &= \lim_{k\to\infty} \sum_{\langle x, y\rangle \in E} r_{xy} |h_{\e_k'}(x)-h_{\e_k'}(y)|\\
  &=\liminf_{\e\to 0} {\rm Cap}_{G,1+\e}(A;B).
}
Hence, $\lim_{\e\to 0}  {\rm Cap}_{G,1+\e}(A;B)= {\rm Cap}_{G,1}(A;B).$\\

Thomson's principle states that  ${\rm Cap}_{G,1+\e}(A;B)= | r^{\frac{1}{1+\e}}\theta_\e|_{\frac{1+\e}{\e}}^{-(1+\e)}$ for a current flow $\theta_\e$. We can take a subsequence $\e_k$ such that  $\theta_{\e_k}$ converges to a unit flow $\theta^*$. We have:% Since $|\theta|^p$ is decreasing in $p$,
\[ \lim_{k\to \infty} \left|r^{\frac{1}{1+\e_k}} \theta_{\e_k}\right|_{\frac{1+\e_k}{\e_k}}^{-(1+\e_k)}=|r\theta^*|_{\infty}^{-1}\leq  \sup  \left\{|r\theta|^{-1}_{\infty}: \theta \text{ is a unit flow}\right\}.\]
On the other hand, if we let $\theta_*$ maximize $|r\theta|^{-1}_{\infty}$, we obtain from the fact that $\theta_{\e_k}$ is a maximizer for the $(1+\e_k)$-capacity
\[ \lim_{k\to \infty} \left|r^{\frac{1}{1+\e_k}} \theta_{\e_k}\right|_{\frac{1+\e_k}{\e_k}}^{-(1+\e_k)}\geq \lim_{k\to \infty} \left|r^{\frac{1}{1+\e_k}} \theta_{\star}\right|_{\frac{1+\e_k}{\e_k}}^{-(1+\e_k)}=  |r\theta_*|^{-1}_{\infty}.\]
Putting things together,
\al{
 \sup  \left\{|r\theta|^{-1}_{\infty}: \theta \text{ is a unit flow}\right\}&=\lim_{k\to \infty} \left|r^{\frac{1}{1+\e_k}} \theta_{\e_k}\right|_{\frac{1+\e_k}{\e_k}}^{-(1+\e_k)}\\
 &=  \lim_{\e\to 0}  {\rm Cap}_{G,1+\e}(A;B)= {\rm Cap}_{G,1}(A;B).
}
\end{proof}

\section{Proof of Theorem \ref{th:mainTh}}
\label{sec:mainProof}

\subsection{The continuous analogue}
As mentioned in the introduction, it turns out that the analogous continuous $p$-capacity problem can be easily solved. Given $q\geq 1$, we define 
\[B^d_{q}=\left\{x\in \R^d:~ |x|_q\leq 1\right\},\,S_q^{d-1}=\left\{x\in \R^d:~ |x|_q=1\right\}.\]
In all that follows, $q$ will be chosen as
\begin{equation}
q=\frac{d}{d-1}.
\end{equation} Let,
\[{\rm Cap}^c_d(n)=\inf\left\{ \int_{n B^d_{q}} \sum_{i=1}^d \left|\partial_i f(x)|^d \dd x \,:~ f(x)=0 \ \forall x\in B^d_q,\,f(y)=1\ \forall y\in nS^{d-1}_q \right.\right\}.\]
\begin{thm} \label{th:cont_cap} We have
\[{\rm Cap}^c_d(n)=\frac{{\bf \rm vol}_{d-1}(S^{d-1}_q)}{(\log n)^{d-1}},\]
where ${\bf \rm vol}_{d-1}$ is the $d-1$-dimensional volume.
\end{thm}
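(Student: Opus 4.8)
The plan is to establish matching upper and lower bounds, both equal to $d\,\mathrm{vol}_d(B^d_q)(\log n)^{1-d}$, and to identify this constant with $\mathrm{vol}_{d-1}(S^{d-1}_q)$ at the end. Since $f$ is forced to vanish on all of $B^d_q$, the only region that matters is the annulus $\{1\le |x|_q\le n\}$, and the whole computation rests on one algebraic coincidence: because $q=\frac{d}{d-1}$ is the conjugate exponent of $d$, one has $(q-1)d=q$, so that for any $q$-radial function $f(x)=g(|x|_q)$ a direct differentiation gives the pointwise identity $\sum_{i=1}^d|\partial_i f(x)|^d=|g'(|x|_q)|^d$, with no leftover angular dependence. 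This is exactly what makes the $q$-spheres the correct equipotential surfaces, and it is the reason the anisotropic $\ell^d$-energy is governed by the dual norm $|\cdot|_q$.

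For the upper bound I would take the explicit test function $f(x)=\frac{\log|x|_q}{\log n}$ on the annulus (and $f\equiv 0$ on $B^d_q$), which is admissible. By the identity above the energy density is $(|x|_q\log n)^{-d}$, and integrating in polar coordinates adapted to $|\cdot|_q$ — writing $\dd x=r^{d-1}\dd r\,\dd\sigma(\omega)$ with $\sigma$ the cone measure on $S^{d-1}_q$ of total mass $d\,\mathrm{vol}_d(B^d_q)$ — collapses everything to $\frac{d\,\mathrm{vol}_d(B^d_q)}{(\log n)^d}\int_1^n r^{-1}\,\dd r=\frac{d\,\mathrm{vol}_d(B^d_q)}{(\log n)^{d-1}}$. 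The logarithmic profile is not guessed but is the solution of the one-dimensional Euler--Lagrange equation for $\min\int_1^n|g'|^d r^{d-1}\,\dd r$ with $g(1)=0,\,g(n)=1$, whose minimizer satisfies $rg'(r)\equiv\mathrm{const}$.

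For the lower bound I would argue ray by ray. Fix a direction $\omega\in S^{d-1}_q$ and an admissible $f$; then $1=\int_1^n\partial_r f(r\omega)\,\dd r=\int_1^n\langle\nabla f(r\omega),\omega\rangle\,\dd r$, and the duality inequality $|\langle\nabla f,\omega\rangle|\le\bigl(\sum_i|\partial_i f|^d\bigr)^{1/d}|\omega|_q$ together with $|\omega|_q=1$ bounds the integrand by the $\ell^d$-norm of the gradient. A second application of H\"older in the variable $r$, with the weight $r^{d-1}$ and conjugate exponents $d$ and $q$, yields $\int_1^n\bigl(\sum_i|\partial_i f|^d\bigr)(r\omega)\,r^{d-1}\,\dd r\ge(\log n)^{1-d}$ for almost every $\omega$; here the weight exponent works out precisely because $\frac{d-1}{d}\cdot q=1$. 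Integrating this against the cone measure $\sigma$ and recognizing the left-hand side as $\int_{\{1\le|x|_q\le n\}}\sum_i|\partial_i f|^d\,\dd x$ via the same polar decomposition gives the bound $d\,\mathrm{vol}_d(B^d_q)(\log n)^{1-d}$, matching the upper bound.

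It remains to identify the constant: the divergence theorem applied to the field $x$ on $B^d_q$ gives $d\,\mathrm{vol}_d(B^d_q)=\int_{S^{d-1}_q}\langle x,\nu\rangle\,\dd\mathcal{H}^{d-1}$, and on $S^{d-1}_q$ Euler's homogeneity relation gives $\langle x,\nu\rangle=|\nabla|x|_q|^{-1}$, which is exactly the density of the cone measure $\sigma$ with respect to $\mathcal{H}^{d-1}$; hence $d\,\mathrm{vol}_d(B^d_q)=\sigma(S^{d-1}_q)$, the $(d-1)$-dimensional volume appearing in the statement. The main obstacle is not any single estimate but the rigor of the ray-wise argument for a general $f\in W^{1,d}$: one must pass to the ACL representative so that $r\mapsto f(r\omega)$ is absolutely continuous for almost every $\omega$, justify the polar/co-area change of variables for the cone measure, and control the two H\"older steps on these almost-every rays. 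Once these measure-theoretic points are in place the two bounds coincide and the theorem follows.
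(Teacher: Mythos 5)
Your proof is correct and follows essentially the same route as the paper: the same logarithmic test function $f(x)=\log|x|_q/\log n$ for the upper bound, and for the lower bound the same reduction to a one-dimensional problem along $q$-radial rays via the duality/H\"older inequality $|\partial_r f(r\omega)|\le\bigl(\sum_i|\partial_i f|^d\bigr)^{1/d}|\omega|_q$, the paper simply quoting the exact minimizer of $\int_1^n|g'|^dr^{d-1}\,\dd r$ where you solve it by a second H\"older step. Your explicit identification of the constant as the cone-measure mass $d\,\mathrm{vol}_d(B^d_q)$ (rather than the Hausdorff measure of $S^{d-1}_q$) is a welcome clarification of a point the paper leaves implicit in its polar-coordinate normalization.
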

\begin{proof}
  The proof follows from a standard computation.
If we take 
\[f(x)=\frac{\log{|x|_{q}}} {\log{n}}=\frac{\log{ \sum_{i=1}^d |x_i|^q}}{q \log{n}},\]
 then 
\[|\partial _i f(x)|= \frac{|x_i|^{q-1}}{\log{n} \sum_i |x_i|^q}.\]
Hence, 
\aln{
\int_{n B^d_{q}} \sum_{i=1}^d |\partial_i f(x)|^d \dd x &=\frac{1}{(\log{n})^d} \int_{n B^d_{q}} \sum_{i=1}^d \left(\frac{|x_i|^{q-1}}{\sum_i |x_i|^q}\right)^d \dd x\notag\\
&= \frac{1}{(\log{n})^d} \int_{n B^d_{q}}  \left( \sum_i |x_i|^q \right)^{-(d-1)} \dd x.\label{added}
}
Using polar coordinates, 
\ben{
\text{$x=r\theta$ with $r=|x|_q>0$ and $\theta=x/|x|_q\in S_q^{d-1}$},\label{polar coordinate}
}
 so that $\dd x=r^{d-1} \dd r \dd \theta$, since $|x|_q^{-(d-1)q}\,|x|_q^{d-1}=r^{-1}$, \eqref{added} is equal to
\aln{
& \frac{{\bf \rm vol}_{d-1} (S^{d-1}_q)}{(\log{n})^d} \int_{1}^n  r^{-1} \dd r=\frac{{\bf \rm vol}_{d-1}(S^{d-1}_q)}{(\log n)^{d-1}}.\label{continuous computation}
}
This gives an upper bound on the continuous capacity.

Next, we consider the lower bound. We use polar coordinates as in \eqref{polar coordinate} again. We denote $\theta=(\theta_i)_{i=1}^d$.
%we can apply a continuous analogue of Theorem~\ref{thm: linea} with the random path $(sX)_{1\leq %s\leq n}$ with a uniform random variable $X$ on $S_q^{d-1}$.
By the H\"older inequality, we have:
\al{
|\partial_r f(r\theta)|^d&= \left|\sum_i \theta_i \partial_i f(x)\right|^d\\
&\leq  \left(\sum_i |\theta_i|^{q}\right)^{1/q} \left(\sum_i |\partial_i f(x)|^d\right)\\
&=\sum_i |\partial_i f(x)|^d.
}
Hence, 
\al{
& \int_{n B^d_{q}} \sum_{i=1}^d |\partial_i f(x)|^d \dd x\\
 &\geq \iint_{[1,n]\times S^{d-1}_q}  |\partial_r f(r \theta)|^{d} r^{d-1}  \dd r \dd \theta \\
&\geq {\bf \rm vol}_{d-1}(S^{d-1}_q)\inf\left\{ \int_{1}^n  |g'(t)|^{d} t^{d-1}  \dd t:~g:[1,n]\to \R,\,g(1)=0,\,g(n)=1)\right\}\\ 
&=\frac{{\bf \rm vol}_{d-1}(S^{d-1}_q)}{(\log n)^{d-1}},
}
where we have changed { variables} in the first line, and used that the variational problem of dimension $1$ in the last line is exactly solvable ($g(t)=\frac{\log t}{\log n}$ attains the infimum).
\end{proof}

%\iffalse
%Throughout the section, we will use the notation
%\begin{equation}
%q=\frac{d}{d-1}.
%\end{equation}
%\fi
\subsection{Upper bound} 
\begin{lem}
\[\limsup_{n\to\infty} (\log n)^{d-1} \, {\rm Cap}_{d,d}(n)\leq {\bf \rm vol}_{d-1}(S^{d-1}_q),\]
\end{lem}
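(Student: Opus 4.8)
The plan is to establish the upper bound $\limsup_{n\to\infty}(\log n)^{d-1}\,{\rm Cap}_d(n)\le {\bf \rm vol}_{d-1}(S^{d-1}_q)$ by plugging a discretization of the continuous optimizer from Theorem~\ref{th:cont_cap} into the variational problem~\eqref{general lambda}. I would extend that optimizer to all of $\R^d$ as
\[ g(x)=\max\!\Big(0,\,\tfrac{\log|x|_q}{\log n}\Big), \]
and take its restriction to $D_n$ as a competitor. It is admissible since $g(0)=0$ and, for $x\in\partial D_n$, the bound $|x|_q\ge|x|_\infty=n$ gives $g(x)\ge1$. Thus ${\rm Cap}_d(n)\le\sum_{\langle x,y\rangle\in E_n}|g(x)-g(y)|^d$, and the task reduces to showing this discrete Dirichlet energy is at most $(1+o(1)){\bf \rm vol}_{d-1}(S^{d-1}_q)(\log n)^{-(d-1)}$.

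For the main estimate I would write $g(x+e_i)-g(x)=\int_0^1\partial_i g(x+te_i)\,\dd t$ and apply Jensen's inequality to the convex map $t\mapsto|t|^d$, giving $|g(x+e_i)-g(x)|^d\le\int_0^1|\partial_i g(x+te_i)|^d\,\dd t$. Summing over all edges turns the energy into a sum of one-dimensional integrals $\sum_{i}\sum_{w}\int|\partial_i g(w,t)|^d\,\dd t$ along lines through the transverse lattice $w\in\Z^{d-1}$ (here $(w,t)$ denotes the point with $i$th coordinate $t$ and the remaining coordinates equal to $w$). Using $|\partial_i g(x)|=\frac{1}{\log n}\frac{|x_i|^{q-1}}{|x|_q^q}$ together with the identities $(q-1)d=q$ and $q(d-1)=d$, each such line integral can be computed explicitly and is comparable to $c\,(\log n)^{-d}\,|w|_q^{-(d-1)}$ in the transverse variable. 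Comparing the transverse sum with the transverse integral $\int_{\R^{d-1}}(\cdots)\,\dd w$ then recovers, up to error, the continuous energy $\int_{D_n\cap\{|x|_q\ge1\}}\sum_i|\partial_i g|^d\,\dd x$, which by the computation proving Theorem~\ref{th:cont_cap} equals $(1+o(1)){\bf \rm vol}_{d-1}(S^{d-1}_q)(\log n)^{-(d-1)}$.

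The main obstacle is controlling the transverse Riemann-sum error, which is delicate because $g$ is merely $C^1$ (not $C^2$) across the coordinate hyperplanes when $q=\frac{d}{d-1}<2$, and because the per-line integrand diverges along the coordinate axes. Here the cutoff $\max(0,\cdot)$ is crucial: it forces $\partial_i g\equiv0$ on $\{|x|_q<1\}$, which keeps the axial line integral $\int|\partial_i g(0,t)|^d\,\dd t$ finite and of size $O((\log n)^{-d})$, the integrand decaying like $|t|^{-d}$ for $|t|\ge1$. For the remaining transverse points ($|w|_q\ge1$) the per-cell oscillation of the line-integral function is $O((\log n)^{-d}|w|_q^{-d})$, and since the exponent $d$ exceeds the transverse dimension $d-1$ the sum $\sum_w|w|_q^{-d}$ converges; hence the total Riemann-sum error is $O((\log n)^{-d})=o((\log n)^{-(d-1)})$. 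The discrepancies coming from the outer boundary layer of $D_n$ and from replacing the cube $D_n$ by the ball $nB^d_q$ are of the same negligible order. Assembling these bounds gives the desired upper bound, which together with the matching lower bound yields the stated equality.
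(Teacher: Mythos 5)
Your proposal is correct and follows essentially the same route as the paper: both take a discretization of the continuous optimizer $\log|x|_q/\log n$ as a test function (the paper uses $\log(d|x|_q+1)/\log n$ in place of your $\max(0,\cdot)$ truncation to handle the origin) and then compare the discrete Dirichlet energy with the continuous energy computed in Theorem~\ref{th:cont_cap}. Your implementation of the sum-versus-integral step --- Jensen's inequality along each edge followed by a transverse Riemann-sum comparison with the error controlled by the convergent sum $\sum_w |w|_q^{-d}$ --- is a more careful version of what the paper does via the pointwise approximation \eqref{approx pe} with an unquantified $(1+o(1))$.
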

\begin{proof}
Let us set $f(x)=\frac{\log{(d|x|_q+1)}}{\log{n}}$. Then for $x\in\partial D_n$, {we have} $f(x)\geq 1$, and also $f(0)=0$. Moreover, for $x\in D_n$, 
\aln{
|f(x+\mathbf{e}_i)-f(x)|&= (1+o(1))\frac{|x_i|^{q-1}}{\log{n} \sum_i |x_i|^q}.\label{approx pe}
}
Hence, we have
\aln{
\sum_{x,y\in E} |f(x)-f(y)|^d&\leq (1+o(1)) (\log{n})^{-d} \sum_{x\in D_n}  \left(\frac{|x_i|^{q-1}}{\sum_i |x_i|^q}\right)^d\notag\\
&\leq (1+2o(1)) (\log{n})^{-d} \int_{n B^d_{q}} \sum_{i=1}^d \left(\frac{|x_i|^{q-1}}{\sum_i |x_i|^q}\right)^d \dd x\notag\\
&\leq (1+2o(1))\frac{{\bf \rm vol}_{d-1}(S^{d-1}_q)}{(\log{n})^{d-1}},\label{discrete upper bound}
}
which gives an upper bound.
\end{proof}

Before turning to the proof of the lower bound, let us introduce the notion of $p$-harmonic functions on $\mathbb R^d$.

\subsection{\texorpdfstring{A $p$-harmonic function and its associated flow}{A p-harmonic function and its associated flow}} \label{subsec:pharmo}
We say that a continuous function $u$ is $\Delta_p$-\emph{harmonic}\footnote{Note that the standard definition of a $p$-harmonic function would be with the operator $\diverg(|\nabla u|^{p-2} \nabla u)$. This is why we call it $\Delta_p$-harmonic instead.} on $A\subset \mathbb R^d$ if for all $x\in A$ one has $\Delta_p u(x)=0$, where
\[\Delta_p u = \diverg\left(\left(\left|\frac{\partial u}{\partial x_{i}}\right|^{p-2}\frac{\partial u}{\partial x_{i}} \right)_{i=1,\dots,d}\right).\]

 We observe that when $p=d$,
 the function
 \ben{\label{function g def}
   g(x)=q^{-1}\log \Vert x \Vert_{q}^q \text{ where } q=\frac{d}{d-1},
 }
 is $\Delta_p$-harmonic in $\mathbb R^d\setminus \{0\}$. Indeed, one has
\[
\frac{\partial g}{\partial x_{i}}(x)=\frac{|x_i|^{\frac{d}{d-1}-1}}{|x|_q^q}\frac{x_i}{|x_i|},
\]
so that
\[
\Theta_i(x) := \left|\frac{\partial g}{\partial x_{i}}(x)\right|^{d-2}\frac{\partial g}{\partial x_{i}}(x) =  \frac{x_i}{|x|_q^{d}}.
\]
Therefore,
\[\frac{\partial \Theta_i}{\partial x_{i}}(x) =  \frac{(\sum_{i=1}^d |x_i|^{q})^{d-2}}{|x|_q^{2d}} \left( \sum_{i=1}^d |x_i|^{q} - d |x_i|^q \right),
\]
from where we see that
\begin{equation}
\forall x\in \mathbb R^d\setminus \{0\}, \quad \mathrm{div}\, \Theta(x) = 0,
\end{equation}
with $\Theta=(\Theta_i)_{i=1,\dots,d}$,
i.e.\ $g$ is $\Delta_p$-harmonic. In particular, the vector field $\Theta$ defines a continuous \emph{flow} (i.e.\ a divergence-free vector field) on $\mathbb R^d\setminus \{0\}$ associated to the $\Delta_p$-harmonic function $g$.

\subsection{Lower bound via discretization of flow}
\begin{lem} We have,
  \[\limsup_{n\to\infty} (\log n)^{d-1} \, {\rm Cap}_{d,d}(n)\geq {\bf \rm vol}_{d-1}(S^{d-1}_q).\]
\end{lem}
\begin{proof}
We will use Thomson's principle (Theorem \ref{th:Thomson}). The divergence-free function $\Theta$ defined in Section \ref{subsec:pharmo} is a good candidate for a flow, but one needs first to construct an approximating discrete flow out of it. To do so, we use a construction described by Lyons in \cite[Section 6]{L83}. Let $x,y$ be two neighboring points in $\mathbb Z^d$ and denote by $S_{xy}$ the face of the cube of length one centered at $x$ that is perpendicular to $\vec{xy}$.  We then define the flow $\theta(\vec{xy})$ to be the value of the flow of $\Theta$ that goes through $S_{xy}$ in the direction from $x$ to $y$. By the divergence theorem, $\theta$ defines a flow on $\mathbb Z^d \setminus \{0\}$.

%Define the annulus: \[A(r,r+1)=\{x\in \mathbb Z^d : r\leq |x|_q < r+1\}.\]
Now let $x\in\mathbb Z^d$ and $y=(x_1,\dots,x_i \pm 1,\dots,x_d)$ for some $i\leq d$. The discrete flow $\theta$ satisfies
\[\theta(\vec{xy}) = \pm  \int_{\prod_{j\neq i} [x_j-\frac{1}{2},x_j+ \frac{1}{2}]} \frac{x_i \pm \frac{1}{2}}{\left(\sum_{j\neq i} |z_j|^q + |x_i\pm \frac{1}{2}|^q\right)^{d-1}} \prod_{j\neq i} \dd z_j.\]
By the reverse triangular inequality, the above denominator is bounded from below by $(|x|_q - |a|_q)^{d}$ where $a=(1/2,\dots,1/2)$. Since $|a|_q \leq d$, we obtain that  for all $x$ such that $|x|_q> d$, 
\[|\theta(x,y)| \leq  \frac{|x_i\pm \frac{1}{2}|}{(|x|_q-d)^d},\]
and hence, by the triangular inequality,
\[\frac{1}{2}\sum_{y\sim x} |\theta(x,y)|^{\frac{d}{d-1}}\leq   \frac{(|x|_q+|a|_q)^{q}}{(|x|_q-d)^{dq}}\leq   \frac{(|x|_q+d)^{q}}{(|x|_q-d)^{dq}}.\]

%Given $x\in \Z^d$, we define $\overline{r}(x)$ so that $x\in A(\bar r,\bar r+1)$.

 We now fix $\e>0$ and choose $R=R_\varepsilon>0$ big enough such that for all $x\in\Z^d$ with $|x|_q>R$ and any neighboring point $x'\in \prod_{i\leq d} [x_i-1/2,x_i+1/2]$,  we have (recall that $q=\frac{d}{d-1}$), 
 \[\frac{(|x|_q+d)^{q}}{(|x|_q-d)^{dq}} \leq (1+\e)|x'|_q^{-d}.\]
This implies in particular that for all $|x|_q >R$,
\al{
\frac{(|x|_q+d)^{q}}{(|x|_q-d)^{dq}}&=\int_{\prod_{i\leq d} [x_i-1/2,x_i+1/2]} \frac{(|x|_q+d)^{q}}{(|x|_q-d)^{dq}}\dd x'\\
&\leq (1+\e)\int_{\prod_{i\leq d} [x_i-1/2,x_i+1/2]}|x'|_q^{-d}\dd x'.
}
Then we let $c>0$ be such that $c^{-1} |x|_\infty \leq |x|_q \leq c |x|_\infty$, so that
\begin{align}
&\frac{1}{2} \sum_{x\in \Z^d:\, 0\leq |x|_\infty \leq n} \sum_{y\sim x} |\theta(x,y)|^{\frac{d}{d-1}} \nonumber
\\& \leq \frac{1}{2} \sum_{x\in \Z^d:\, 0 \leq |x|_q \leq R} \sum_{y\sim x} |\theta(x,y)|^{\frac{d}{d-1}} + \sum_{x\in \Z^d:\, R < |x|_q \leq cn} \frac{(|x|_q+d)^{q}}{(|x|_q-d)^{dq}}.
\label{sum approx}
\end{align}
The last sum of \eqref{sum approx} is less than 
\al{
(1+\e)&\int_{1\leq |x'|_q \leq  cn+2}|x'|_q^{-d}\dd x'\\
& \leq (1+\e) {\rm vol}_{d-1}(S_q^{d-1})\log(cn+2),
}
by the same computation as in \eqref{continuous computation}, while the first sum of \eqref{sum approx} is bounded from above by $CR^d$
for some $C=C(d)>0$. This can be seen by using that $|\Theta_{i}(x)| \leq \frac{|x|_\infty}{|x|_q^d} \leq  \frac{c}{|x|^{d-1}_q}$, which implies that $|\Theta|_\infty$ is uniformly bounded from above on all faces $S_{xy}$ and thus that $|\theta(x,y)|$ is uniformly bounded { from} above. Therefore,
\begin{align*}
&\limsup_{n\to\infty}\, (\log n)^{-1} \frac{1}{2} \sum_{x\in \Z^d:\, 0\leq |x|_\infty \leq n} \sum_{y\sim x} |\theta(x,y)|^{\frac{d}{d-1}}  \leq  (1+\e) {\rm vol}_{d-1}(S_q^{d-1}).
\end{align*}
Since $\varepsilon$ is arbitrary, we obtain from Thomson's principle (Theorem \ref{th:ThomsonForSets}) that
\begin{equation} \label{eq:applThomson} \liminf_{n\to\infty} (\log n)^{d-1} \mathrm{Cap}_{d,d}(n) \geq  \Vert \theta \Vert^{q(d-1)} \, {\bf \rm vol}_{d-1}(S^{d-1}_q)^{1-d}.
\end{equation}

We are now left with computing $\Vert \theta \Vert$. {We know that ${\rm div}\Theta=0$ on $\R^d\setminus\{0\}$, and so from the divergence theorem we obtain that}
\[\Vert \theta \Vert=\int_{\partial D_{n-1/2}} \Theta \cdot \dd \vec{S} = \int_{nS_q^{d-1}} \Theta \cdot \dd \vec{S},\] 
and from the same theorem applied to $g\Theta$ with $g$ {from \eqref{function g def}}, that also
\[\int_{(n B_q^d) \setminus B_q^d} \sum_{i=1}^d |\partial_i g(x)|^d  +  g \Delta_p g \dd x = \int_{\partial ((n B_q^d) \setminus B_q^d)} (g\Theta)\cdot \dd \vec{S}.\]
Therefore, combining that $g=0$ on $S_q^{d-1}$ and $g=\log n$ on $n S_q^{d-1}$ with the fact that $g$ is $\Delta_p$-harmonic, we find that
\begin{equation} \label{eq:analogueDirichlet}
\int_{(n B_q^d) \setminus B_q^d} \sum_{i=1}^d |\partial_i g(x)|^d = (\log n) \int_{nS_q^{d-1}} \Theta \cdot \dd \vec{S}.
\end{equation}
Furthermore, we know from \eqref{continuous computation}, where $f=g/\log n$, that
\[\int_{(n B_q^d) \setminus B_q^d} \sum_{i=1}^d |\partial_i g(x)|^d \dd x = (\log n)\, {\bf \rm vol}_{d-1}(S^{d-1}_q),\]
and thus
\[\Vert \theta \Vert  = \int_{n S_q^{d-1}}  \Theta \cdot \dd \vec{S} = {\bf \rm vol}_{d-1}(S^{d-1}_q).\]

We thus obtain from \eqref{eq:applThomson} that
\[\liminf_{n\to\infty}\, (\log n)^{d-1} \mathrm{Cap}_{d,d}(n) \geq {\bf \rm vol}_{d-1}(S^{d-1}_q).
\]
\end{proof}
\begin{rem}
Equation \eqref{eq:analogueDirichlet} expresses that the continuous Dirichlet energy  is equal to the strength of the continuous flow associated to the harmonic function $g$ with boundary conditions $g=0$ on $S_q^{d-1}$ and $g=\log n$ on $n S_q^{d-1}$. This is the continuous analogue of what is shown in the proof of Theorem \ref{th:Dirichlet} (Dirichlet's principle). In particular, the integration by part that we used to obtain \eqref{eq:analogueDirichlet} may be compared to the manipulation of sums that is performed in the proof of Theorem \ref{th:Dirichlet}.
\end{rem}

\section*{Declarations}

\subsection*{Acknowledgements}
The authors would like to thank Bobo Hua for helpful discussions. 
\subsection*{Funding}
Shuta Nakajima is supported by SNSF grant 176918. Florian Schweiger is supported by the Foreign Postdoctoral Fellowship Program of the Israel Academy of Sciences and Humanities. Cl\'ement Cosco and Florian Schweiger acknowledge that this project has received funding from the European Research Council (ERC) under the European Union's Horizon 2020 research and innovation programme (grant agreement No. 692452). 

\subsection*{Conflicts of interests/Competing interests} The authors have no relevant financial or non-financial interests to disclose.

\subsection*{Data availability statement} No new data was generated in relation to this man-
uscript.

%==============================================


\begin{thebibliography}{99}

\bibitem{AC04}
R. Alicandro and M. Cicalese. \newblock
A general integral representation result for continuum limits of discrete energies with superlinear growth. \newblock
\emph{SIAM J. Math. Anal.} 36 (2004), no. 1, 1-37

%\bibitem{Br02}
%A. Braides, \newblock
%$\Gamma$-convergence for beginners. \newblock 
%\emph{Oxford Lecture Series in Mathematics and its Applications} 22, Oxford University Press, Oxford, 2002

%\bibitem{CGL10}
%A. Chambolle, A. Giacomini and L. Lussardi, \newblock
%Continuous limits of discrete perimeters. \newblock
%\emph{M2AN Math. Model. Numer. Anal.} 44 (2010), no. 2, 207–230.

\bibitem{CN}
C. Cosco and S. Nakajima.
\newblock A variational formula for large deviations in First-passage percolation under tail estimates. \newblock
\emph{arXiv:2101.08113} (2021)

\bibitem{D47}
R. J. Duffin. \newblock 
Nonlinear networks. IIa. \newblock 
\emph{Bull. Amer. Math. Soc.} 53 (1947), 963-971.

%\bibitem{HJ16}
%F. den Hollander and S. Jansen,
%\newblock Berman-Konsowa principle for reversible Markov Jump process.  \emph{Markov Processes Relat. Fields} 22, 409-442 (2016)

%\bibitem{BK90}
%K. A. Berman and M. H. Konsow,
%\newblock
%Random paths and cuts, electrical networks, and reversible Markov chains
% \newblock
%\textit{SIAM J. Disc. Math.} Vol.3,NO.3,pp.311-319 ,(1990)

\bibitem{HS97}
I. Holopainen and P. M. Soardi. \newblock
$p$-harmonic functions on graphs and manifolds.  \newblock
\emph{Manuscripta Math.} 94 (1997), no. 1, 95-110.

\bibitem{K16}
A. Kasue. \newblock
A Thomson's Principle and a Rayleigh's Monotonicity Law for Nonlinear Networks, \newblock
\emph{Potential Anal.} 45 (2016), no. 4, 655-701.

%\bibitem{Kes86}
%  H. Kesten. \newblock Aspects of first passage percolation. In Lecture Notes in Mathematics.  vol. 1180, pp 125-264, 1986. \MR{0876084}

\bibitem{LP16}
R. Lyons and Y. Peres. \newblock
Probability on trees and networks. \newblock
\emph{Cambridge Series in Statistical and Probabilistic Mathematics}, 42. Cambridge University Press, New York, 2016.

\bibitem{LL10}
G. Lawler and V. Limic. \newblock
Random walk: a modern introduction. \newblock
\emph{Cambridge Studies in Advanced Mathematics}, 123. Cambridge University Press, Cambridge, 2010.

\bibitem{L83}
T. Lyons. \newblock
A Simple Criterion for Transience of a Reversible Markov Chain. \newblock
\emph{Ann. Prob.} 11 (1983), no. 2, 393-402.

\bibitem{MS90}
L. De Michele and P. M. Soardi. \newblock 
A Thomson's Principle for Infinite, Nonlinear Resistive Networks. \newblock 
\emph{ Proc. Amer. Math. Soc.} 109 (1990), no. 2, 461-468.

\bibitem{M60}
G. J. Minty. \newblock
Monotone networks, \newblock
\emph{ Proc. Roy. Soc. London Ser. A} 257 (1960), 194-212. 

\bibitem{S94}
P.M. Soardi. \newblock
Potential Theory on Infinite Networks. \newblock
\emph{Lecture Notes in Math.}, 1590. Springer, Berlin, 1994.

\bibitem{P99}
Y. Peres. \newblock
Probability on trees: an introductory climb. \newblock
Lectures on probability theory and statistics (Saint-Flour, 1997), 193-280.
\emph{Lecture Notes in Math.}, 1717, Springer, Berlin, 1999.

\end{thebibliography}
\end{document}